\newtheorem{theorem}{Theorem}[section]
\newtheorem{example}{Example}[section]
\newtheorem{lemma}[theorem]{Lemma}
\newtheorem{problem}{Problem}
\newtheorem{remark}{Remark}[section]
\begin{document}

\title{Spectral Design of Dynamic Networks via Local Operations}
\author{Victor M. Preciado,~\IEEEmembership{Member,~IEEE,} Michael M.
Zavlanos, \IEEEmembership{Member,~IEEE}, and Ali Jadbabaie,~%
\IEEEmembership{Senior~Member,~IEEE} \thanks{%
V.M. Preciado and A. Jadbabaie are with the Department of Electrical and
Systems Engineering at the University of Pennsylvania, Philadelphia, PA
19104 USA. (e-mail: preciado@seas.upenn.edu; jadbabai@seas.upenn.edu). } 
\thanks{%
M.M. Zavlanos is with the Department of Mechanical Engineering at the
Stevens Institute of Technology, Hoboken, NJ 07030 USA. (e-mail:
michael.zavlanos@stevens.edu). } \thanks{%
This work was supported by ONR MURI \textquotedblleft Next Generation
Network Science\textquotedblright\ and AFOSR \textquotedblleft Topological
and Geometric Tools for Analysis of Complex Networks\textquotedblright .}}
\maketitle

\begin{abstract}
Motivated by the relationship between the eigenvalue spectrum of a network
and the behavior of dynamical processes evolving in it, we propose a
distributed iterative algorithm in which a group of $n$ autonomous agents
self-organize the structure of their communication network in order to
control the network's eigenvalue spectrum. In our algorithm, we assume that
each agent has only access to a local (`myopic') view of the network around
it and that there is no centralized coordinator. In each iteration, agents
of the network perform a decentralized decision process in which agents
share limited information about their myopic vision of the network to find
the most beneficial edge addition/deletion from a spectral point of view. We
base our approach on a novel distance function defined in the space of
eigenvalue spectra that is written in terms of the spectral moments of the
Laplacian matrix. In each iteration, agents in the network run a greedy
algorithm to find the edge addition/deletion that minimized the spectral
distance to the desired spectrum. The spectral distance presents interesting
theoretical properties that allow an elegant and efficient distributed
implementation of the greedy algorithm using distributed consensus. Our
distributed algorithm is stable by construction, i.e., locally optimizes the
network's eigenvalue spectrum, and is shown to perform very well in
practice.
\end{abstract}


\section{Introduction}

\label{sec:introduction}

A wide variety of complex networks composed of autonomous agents are able to
display a remarkable level of self-organization despite the absence of a
centralized coordinator \cite{W62,H83}. For example, the intricate structure
of many biological, social and economic networks, emerges as the result of
local interactions between agents aiming to optimize their local utilities 
\cite{J08}. In most real cases, these agents have only access to myopic
information about the structure of the network around them. Despite the
limited information accessible to each agent, most of these
\textquotedblleft self-engineered\textquotedblright\ networks are able to
efficiently satisfy their functional requirements.

The behavior of many networked dynamical processes, such as information
spreading, synchronization, or decentralized coordination, is directly
related to the network eigenvalue spectra \cite{P08}. In particular, the
spectrum of the Laplacian matrix of a network plays a key role in the
analysis of synchronization in networks of nonlinear oscillators \cite%
{PC98,PV05}, as well as in the behavior of many distributed algorithms \cite%
{L97}, and decentralized control problems \cite{FM04,OM04}. Motivated by the
relationship between a network's eigenvalue spectrum and the behavior of
dynamical processes evolving in it, we propose a distributed iterative
algorithm in which a group of autonomous agents self-organize the structure
of their communication network in order to control the network's eigenvalue
spectrum. The evolution of the graph is ruled by a decentralized decision
process in which agents share limited information about their myopic vision
of the network to decide which network adjustment is most beneficial
globally.

Optimization of network eigenvalues has been studied by several authors in
both centralized \cite{GMS90,GB06,KM06} and decentralized settings \cite%
{DJ06}. In these papers, the objective is usually to find the weights
associated to the edges of a given network in order to optimize eigenvalues
of particular relevance, such as the Laplacian spectral gap or spectral
radius (i.e., the second smallest and largest eigenvalues of the Laplacian
matrix, respectively). In contrast to existing techniques, we propose a
distributed framework where we control the so-called spectral moments of the
Laplacian matrix by iteratively modifying the structure of the network. We
show that the benefits of controlling the spectral moments, instead of
individual eigenvalues, lies in lower computational cost and elegant
distributed implementation. The performance of our algorithm is illustrated
in nontrivial computer simulations.

The rest of this paper is organized as follows. In Section~\ref{sec:problem}%
, we review terminology and formulate the problem under consideration. In
Section~\ref{Moments from Metrics}, we introduce a decentralized algorithm
to compute the spectral moments of the Laplacian matrix from myopic views of
the network's structure. We also introduce a novel perturbation technique to
efficiently compute the effect of adding or removing edges on the spectral
moments. Based on these results, in Section~\ref{sec:controlmoments}, we
propose a distributed algorithm in which a group of autonomous agents modify
their network of interconnections to control of the spectral moments of a
network towards desired values. Finally, in Section~\ref{sec:simulations},
we illustrate our approach with several computer simulations.\bigskip


\section{Preliminaries \& Problem Definition}

\label{sec:problem}


\subsection{Eigenvalues of Graphs and their Spectral Moments}

Let $\mathcal{G}=\left( \mathcal{V},\mathcal{E}\right) $ be an undirected
graph, where $\mathcal{V}=\left\{ 1,\dots ,n\right\} $ denotes a set of $n$
nodes and $\mathcal{E}\subseteq \mathcal{V}\times \mathcal{V}$ denotes a set
of $e$ undirected edges. If $\left( i,j\right) \in \mathcal{E}$, we call
nodes $i$ and $j$ \emph{adjacent} (or first-neighbors), which we denote by $%
i\sim j$. We define the set of first-neighbors of a node $i$ as $\mathcal{N}%
_{i}=\{j\in \mathcal{V}:\left( i,j\right) \in \mathcal{E}\}.$ The \emph{%
degree} $d_{i}$ of a vertex $i$ is the number of nodes adjacent to it, i.e., 
$d_{i}=\left\vert \mathcal{N}_{i}\right\vert $.\footnote{%
We define by $|X|$ the cardinality of the set $X$.} An undirected graph is
called \emph{simple} if its edges are unweighted and it has no self-loops%
\footnote{%
A self-loop is an edge of the type $\left( i,i\right) $.}. A graph is \emph{%
weighted} if there is a real number associated with every edge. More
formally, a weighted graph $\mathcal{H}$ can be defined as the triad $%
\mathcal{H=}\left( \mathcal{V},\mathcal{E},\mathcal{W}\right) $, where $%
\mathcal{V}$ and $\mathcal{E}$ are the sets of nodes and edges in $\mathcal{H%
}$, and $\mathcal{W=}\left\{ w_{ij}\in \mathbb{R},\text{ for all }\left(
i,j\right) \in \mathcal{E}\right\} $ is the set of (possibly negative)
weights.

Graphs can be algebraically represented via matrices. The \emph{adjacency
matrix} of a simple graph $\mathcal{G}$, denoted by $A_{\mathcal{G}%
}=[a_{ij}] $, is an $n\times n$ symmetric matrix defined entry-wise as $%
a_{ij}=1$ if nodes $i$ and $j$ are adjacent, and $a_{ij}=0$ otherwise. Given
a weighted, undirected graph $\mathcal{H}$, the weighted adjacency matrix is
defined by $W_{\mathcal{H}}=\left[ w_{ij}\right] $, where $w_{ij}$ is the
weight associated to edge $\left( i,j\right) \in \mathcal{E}$ and $w_{ij}=0$
if $i$ is not adjacent to $j$. We define the \emph{degree matrix} of a
simple graph $\mathcal{G}$ as the diagonal matrix $D_{\mathcal{G}%
}=diag\left( d_{i}\right) $. We define the \emph{Laplacian matrix }$L_{%
\mathcal{G}}$ (also known as combinatorial Laplacian, or Kirchhoff matrix)
of a simple graph as $L_{\mathcal{G}}=D_{\mathcal{G}}-A_{\mathcal{G}}$. For
simple graphs, $L_{\mathcal{G}}$ is a symmetric, positive semidefinite
matrix, which we denote by $L_{\mathcal{G}}\succeq 0$ \cite{Big93}. Thus, $%
L_{\mathcal{G}}$ has a full set of $n$ real and orthogonal eigenvectors with
real nonnegative eigenvalues $0=\lambda _{1}\leq \lambda _{2}\leq ...\leq
\lambda _{n}$. Furthermore, the trivial eigenvalue $\lambda _{1}=0$ of $L_{%
\mathcal{G}}$ always admits a corresponding eigenvector $v_{1}=\left(
1,1,...,1\right) ^{T}$. The algebraic multiplicity of the trivial eigenvalue
is equal to the number of connected components in $\mathcal{G}$. The
smallest and largest nontrivial eigenvalues of $L_{\mathcal{G}}$, $\lambda
_{2}$ and $\lambda _{n}$, are called the spectral gap and spectral radius of 
$L_{\mathcal{G}}$, respectively.

Given an undirected (possibly weighted) graph $\mathcal{G}$, we denote its
Laplacian spectrum by $S\left( \mathcal{G}\right) =\left\{ \lambda
_{1},...,\lambda _{n}\right\} $, and define the $k$-th Laplacian spectral
moment of $\mathcal{G}$ as, \cite{Big93}: 
\begin{equation}
m_{k}\left( \mathcal{G}\right) \triangleq \frac{1}{n}\sum_{i=1}^{n}\lambda
_{i}^{k}.  \label{Moments Definition}
\end{equation}%
The following theorem states that an eigenvalue spectrum is uniquely
characterized by a finite sequence of moments:

\begin{theorem}
\label{Uniqueness from Moments}Consider two undirected (possibly weighted)
graphs $G_{1}$ and $G_{2}$ with Laplacian eigenvalue spectra $S\left(
G_{1}\right) =\{\lambda _{1}^{\left( 1\right) }\leq ...\leq \lambda
_{n}^{\left( 1\right) }\}$ and $S_{2}\left( G_{1}\right) =\{\lambda
_{1}^{\left( 2\right) }\leq ...\leq \lambda _{n}^{\left( 2\right) }\}$.
Then, $\lambda _{i}^{\left( 1\right) }=\lambda _{i}^{\left( 2\right) }$ for
all $1\leq i\leq n$ if and only if $m_{k}\left( G_{1}\right) =m_{k}\left(
G_{2}\right) $ for $0\leq k\leq n-1$.
\end{theorem}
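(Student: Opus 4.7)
My plan is to prove the two directions separately, with the easy direction ($\Rightarrow$) handled by direct substitution into~(\ref{Moments Definition}): identical Laplacian spectra trivially produce identical moments of every order. The substance is in the converse, which I would reduce to reconstructing the Laplacian characteristic polynomial
\[
\chi (t)=\prod_{i=1}^{n}(t-\lambda _{i})=t^{n}-c_{1}t^{n-1}+c_{2}t^{n-2}-\cdots +(-1)^{n}c_{n}
\]
from the moment data. Once the coefficients $c_{1},\ldots ,c_{n}$ are shown to be fixed by those moments, the multisets of eigenvalues of $G_{1}$ and $G_{2}$ coincide; combined with the prescribed ordering $\lambda _{1}\leq \cdots \leq \lambda _{n}$, this yields $\lambda _{i}^{(1)}=\lambda _{i}^{(2)}$ for every $i$.

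The first step is to invoke Newton's identities, which relate the elementary symmetric polynomials $c_{k}=e_{k}(\lambda _{1},\ldots ,\lambda _{n})$ to the power sums $p_{k}=\sum _{i=1}^{n}\lambda _{i}^{k}=n\,m_{k}(\mathcal{G})$ through the triangular recursion
\[
k\,c_{k}=\sum _{j=1}^{k}(-1)^{j-1}c_{k-j}\,p_{j},\qquad c_{0}=1,\quad 1\leq k\leq n.
\]
Solved in order, these relations express each $c_{k}$ uniquely in terms of $p_{1},\ldots ,p_{k}$. Thus the hypothesis $m_{k}(G_{1})=m_{k}(G_{2})$ for $1\leq k\leq n-1$ already forces the coefficients $c_{1},\ldots ,c_{n-1}$ of the two characteristic polynomials to agree.

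The main obstacle I anticipate is that the $n-1$ available moments fall one short of the $n$ power sums Newton's recursion naively requires to recover all of $\chi$: the constant-term coefficient $c_{n}=\prod _{i}\lambda _{i}$ would depend on $p_{n}$, which is not supplied by the hypothesis. The observation that closes the argument is structural rather than moment-theoretic: every graph Laplacian, weighted or not, satisfies $L_{\mathcal{G}}\,v_{1}=0$ with $v_{1}=(1,\ldots ,1)^{T}$, so $0$ is always in the spectrum. Consequently $c_{n}=\prod _{i}\lambda _{i}=0$ for both $G_{1}$ and $G_{2}$ automatically, which supplies the missing coefficient without any further moment. Combining this with the Newton recursion reconstructs $\chi$ completely and yields the desired equivalence. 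The remaining work is purely the bookkeeping of the recursion, which I do not expect to pose any real difficulty.
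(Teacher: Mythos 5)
Your proof is correct, but it takes a genuinely different route from the paper's. The paper first uses the Cayley--Hamilton theorem to argue that the truncated moment sequence determines the entire infinite sequence $(m_{k})_{k\geq 0}$, and then invokes Carleman's condition from the classical moment problem to conclude that this infinite sequence determines the spectral measure $\mu_{A}(x)=\sum_{i}\delta(x-\lambda_{i})$, hence the spectrum. You instead stay entirely in finite-dimensional algebra: Newton's identities convert the power sums $p_{k}=n\,m_{k}$, $1\leq k\leq n-1$, into the elementary symmetric functions $c_{1},\dots ,c_{n-1}$, and the missing top coefficient $c_{n}=\prod_{i}\lambda_{i}$ is forced to vanish because $L_{\mathcal{G}}\mathbf{1}=0$. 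Your version is more elementary (no measure theory; the moment-problem machinery is overkill for a finite atomic measure) and, importantly, it makes explicit exactly where the Laplacian structure enters. The paper's argument in fact needs this same input and never supplies it: its recursion $m_{t+n}=-\alpha_{n-1}m_{t+n-1}-\cdots -\alpha_{0}m_{t}$ presupposes the characteristic coefficients $\alpha_{i}$, of which $\alpha_{n-1},\dots ,\alpha_{1}$ are recoverable from $m_{1},\dots ,m_{n-1}$ by Newton's identities but $\alpha_{0}=(-1)^{n}\det A$ is not --- and the claim as phrased there for an arbitrary symmetric matrix is false (e.g.\ $\mathrm{diag}(0,2)$ and $\mathrm{diag}(1,1)$ share $m_{0}$ and $m_{1}$ yet are not isospectral). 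Your observation that $\det L_{\mathcal{G}}=0$ closes precisely this gap. The only caveat worth recording is that for weighted graphs the identity $c_{n}=0$ relies on the convention that the weighted Laplacian has zero row sums, so that $\mathbf{1}$ lies in its kernel; this is the convention the paper adopts.
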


\begin{proof}
In the Appendix.
\end{proof}

In the rest of this paper we will focus on the spectrum of the graph
Laplacian matrix $L_{\mathcal{G}}$ and its spectral moments, which we denote
by $m_{k}\left( L_{\mathcal{G}}\right) $. In this case, Theorem~\ref%
{Uniqueness from Moments}, implies that the Laplacian spectral moment of a
graph on $n$ nodes is uniquely characterize by the sequence of $n-1$
spectral moments $\left( m_{k}\left( L_{\mathcal{G}}\right) \right)
_{k=1}^{n-1}$. It is worth remarking that two nonisomorphic\footnote{%
Two simple graphs $\mathcal{G}_{1}$ and $\mathcal{G}_{2}$ with adjacency
matrices $A_{\mathcal{G}_{1}}$ and $A_{\mathcal{G}_{2}}$ are \emph{isomorphic%
} if there exists a permutation matrix $P_{n}$ such that $A_{\mathcal{G}%
_{1}}=P_{n}A_{\mathcal{G}_{2}}P_{n}^{T}$.} graphs $\mathcal{G}_{1}$ and $%
\mathcal{G}_{2}$ can present the same eigenvalue spectrum \cite{CDS98}, in
which case we say that $\mathcal{G}_{1}$ and $\mathcal{G}_{2}$ are
isospectral. In other words, the eigenvalue spectrum of a graph is not
enough to characterize its structure. On the other hand, as we shall show in
Section \ref{Moments from Metrics}, there are many interesting connections
between the structural features of a graph $\mathcal{G}$ and the spectral
moments of its Laplacian matrix, $m_{k}\left( L_{\mathcal{G}}\right) $.


\subsection{Local Structural Properties of Graphs}

\label{sec_local_structural_properties}

In this section we define a collection of structural properties that are
important in our derivations. A \emph{walk} of length $k$ from node $i_{1}$
to node $i_{k+1}$ is an ordered sequence of nodes $\left(
i_{1},i_{2},...,i_{k+1}\right) $ such that $i_{j}\sim i_{j+1}$ for $%
j=1,2,...,k$. One says that the walk \emph{touches} each of the nodes that
comprises it. If $i_{1}=i_{k+1}$, then the walk is closed. A closed walk
with no repeated nodes (with the exception of the first and last nodes) is
called a \emph{cycle}. Given a walk $p=\left( i_{1},i_{2},...,i_{k+1}\right) 
$ in a weighted graph $\mathcal{H}$ with weighted adjacency matrix $W_{%
\mathcal{H}}=\left[ w_{ij}\right] $, we define the weight of the walk as, $%
\omega \left( p\right) =w_{i_{1}i_{2}}w_{i_{2}i_{3}}...w_{i_{k}i_{k+1}}$.

We now define the concept of local neighborhood around a node. Let $\delta
\left( i,j\right) $ denote the \emph{distance} between two nodes $i$ and $j$
(i.e., the minimum length of a walk from $i$ to $j$). By convention, we
assume that $\delta \left( i,i\right) =0$. We define the $r$-th order
neighborhood $\mathcal{G}_{i,r}=(\mathcal{N}_{i,r},\mathcal{E}_{i,r})$\
around a node $i$ as the subgraph $\mathcal{G}_{i,r}\subseteq \mathcal{G}$
with node-set $\mathcal{N}_{i,r}\triangleq \left\{ j\in \mathcal{V}:\delta
\left( i,j\right) \leq r\right\} $, and edge-set $\mathcal{E}%
_{i,r}=\{(v,w)\in \mathcal{E}$ s.t. $v,w\in \mathcal{N}_{i,r}\}$. Given a
set of $k$ nodes $\mathcal{K}\subseteq \mathcal{V}$, we define $\mathcal{G}_{%
\mathcal{K}}$ as the subgraph of $\mathcal{G}$ with node-set $\mathcal{V}%
\left( \mathcal{G}_{\mathcal{K}}\right) =\mathcal{K}$ and edge-set $\mathcal{%
E}\left( \mathcal{G}_{\mathcal{K}}\right) =\{\left( i,j\right) \in \mathcal{E%
}$ s.t. $i,j\in \mathcal{K}\}$. We define $L_{\mathcal{G}}\left( \mathcal{K}%
\right) $ as the $k\times k$ \emph{submatrix} of $L_{\mathcal{G}}$ formed by
selecting the rows and columns of $L_{\mathcal{G}}$ indexed by $\mathcal{K}$%
. In particular, we define the Laplacian submatrix $L_{i,r}\triangleq L_{%
\mathcal{G}}\left( \mathcal{N}_{i,r}\right) $.

We say that a structural measurement is local with a certain radius $r$ if
it can be computed from the set of local neighborhoods $\{\mathcal{G}_{i,r}$%
, $i=1,...,n\}$. For example, the degree sequence of $\mathcal{G}$ is a
local structural measurement (with radius $1$), since we can compute the
degree of each node $i$ from the neighborhood $\mathcal{N}_{i,1}$. In
contrast, the eigenvalue spectrum of the Laplacian matrix is not a local
property, since we cannot compute the eigenvalues unless we know the
complete graph structure. One of the main contributions of this paper is to
propose a novel methodology to extract global information regarding the
Laplacian eigenvalue spectrum from the set of local neighborhoods.


\subsection{Spectral Metrics and Problem Definition}

\label{sec_problem_definition}

As discussed in Section~\ref{sec:introduction}, our goal is to propose a
distributed algorithm to control the eigenvalue spectrum of a multi-agent
network, via its spectral moments, by iteratively adding/removing edges in
the network; see Section~\ref{sec_local_structural_properties}. For this, we
define the following spectral distance between two graphs $G_{a}$ and $G_{b}$%
, with spectra $S_{a}=\{\lambda _{i}^{\left( a\right) }\}_{i=1}^{n}$ and $%
S_{b}=\{\lambda _{i}^{\left( b\right) }\}_{i=1}^{n}$, as\footnote{%
Note that $d_{M}$ is a distance in the space of eigenvalue spectra, but not
in the space of graphs, since we can find nonisomorphic graphs that are
isospectral.} 
\begin{equation}
d_{M}\left( S_{a},S_{b}\right) =\sum_{k=1}^{n-1}\left( m_{k}\left(
G_{a}\right) ^{1/k}-m_{k}\left( G_{b}\right) ^{1/k}\right) ^{2}.  \label{d_m}
\end{equation}%
According to Theorem \ref{Uniqueness from Moments}, two graphs are
isospectral if their first $n-1$ spectral moments coincide; thus, $d_{M}$ in
(\ref{d_m}) is in fact a distance function in the space of graph spectra. We
further define the \emph{spectral pseudometric}\footnote{%
A pseudometric is a generalization of distance in which two distinct points
(in our case, two distinct spectra) can have zero distance.}:%
\begin{equation}
d_{K}\left( S_{a},S_{b}\right) =\sum_{k=1}^{K}\left( m_{k}\left(
G_{a}\right) ^{1/k}-m_{k}\left( G_{b}\right) ^{1/k}\right) ^{2},  \label{d_k}
\end{equation}%
for $K<n-1$. The benefit of using the spectral pseudodistance versus other
spectral distances is due to the fact that, as we shall show in Section \ref%
{Moments from Metrics}, we can efficiently compute the first $K$ spectral
moments of the Laplacian matrix from the set of local Laplacian submatrices
with radius $\left\lfloor K/2\right\rfloor $, i.e., $\{L_{i,\left\lfloor
K/2\right\rfloor },i\in \mathcal{V}\}$. In other words, assuming that each
agent has access to the Laplacian submatrix associated to its neighborhood
with radius $r$, we shall show how to distributedly compute the first $2r+1$
Laplacian moments of the complete graph $L_{\mathcal{G}}$. With the notation
defined above, we can rigorously state the problem addressed in this paper
as follows:

\begin{problem}
\label{Main Problem}Given a desired spectrum $S^{\ast }=\left\{ \lambda
_{i}^{\ast }\right\} _{i=1}^{n}$, find a simple graph $\mathcal{G}^{\ast }$
such that its Laplacian eigenvalue spectrum, denoted by $S\left( \mathcal{G}%
^{\ast }\right) $, minimizes $d_{K}\left( S\left( \mathcal{G}^{\ast }\right)
,S^{\ast }\right) $.
\end{problem}

Finding a simple graph with a given (feasible \footnote{%
We say that an eigenvalue spectrum is feasible if there is a simple graph
whose Laplacian matrix presents that spectrum.}) eigenvalue spectrum is, in
general, a hard combinatorial problem, even in a centralized setting. In
this paper, we propose a distributed approximation algorithm to find a graph
with a spectrum `close to' $S^{\ast }$ in the $d_{K}$ pseudometric. In our
algorithm, a group of agents located at the nodes of a network iteratively
add/remove edges to drive the network's eigenvalue spectrum towards the
desired spectrum. In each iteration, the set of agents perform a
decentralized decision process to find the most beneficial edge
addition/deletion from the point of view of the global eigenvalue spectrum.

To formulate our algorithm, we first need to define the \emph{edit distance} 
$d_{E}\left( \mathcal{G}_{a},\mathcal{G}_{b}\right) $\ between two graphs $%
\mathcal{G}_{a}$ and $\mathcal{G}_{b}$, which is the minimum number of edge
additions plus edge deletions to transform $\mathcal{G}_{a}$ into a graph
that is isomorphic to $\mathcal{G}_{b}$. To approximately solve Problem~\ref%
{Main Problem} in a distributed way, we propose the following iteration to
determine a sequence of graphs $\{\mathcal{G}(t)\}_{t\geq 0}$, starting from
any graph $\mathcal{G}_{0}$: 
\begin{equation}
\begin{tabular}{rrl}
$\mathcal{G}(t+1)\triangleq $ & $\arg \min_{\mathcal{G}}$ & $d_{K}\left(
S\left( \mathcal{G}\right) ,S^{\ast }\right) $ \\ 
& $\text{s.t.}$ & $d_{E}\left( \mathcal{G}\left( t\right) ,\mathcal{G}%
\right) =1,$ \\ 
&  & $\lambda _{2}\left( \mathcal{G}\right) >0.$%
\end{tabular}
\label{Optimization Step}
\end{equation}
The resulting sequence of spectra $\{S\left( \mathcal{G}(t)\right) \}_{t\geq
0}$ converges to $S^{\ast }$ as $t$ grows. The constraint $d_{E}\left( 
\mathcal{G}\left( t\right) ,\mathcal{G}\left( t+1\right) \right) =1$
enforces only single edge additions or deletions at each iteration, while
the requirement $\lambda _{2}\left( \mathcal{G}\left( t\right) \right) >0$
enforces graph connectivity at all times, which will be necessary for the
distributed implementation in Section \ref{sec:controlmoments}. Note that
the Iteration (\ref{Optimization Step}) typically requires global knowledge
of the network structure. In this paper, we propose a computationally
efficient, distributed algorithm in which agents in the network solve (\ref%
{Optimization Step}) using only their local, myopic views of the network
structure. In particular, we shall show how the set of agents can compute,
in a distributed fashion, the effect of an edge addition/deletion on the
first $2r+1$ Laplacian moments. Furthermore, we shall also propose a
distributed algorithm to find the edge addition/deletion that minimizes the
resulting value of the spectral pseudodistance to $S^{\ast }$. Before we
describe the implementation details in Section~\ref{sec:controlmoments}, we
first provide the theoretical foundation for our approach in Section~\ref%
{Moments from Metrics}.

\begin{remark}[Convergence]
Several remarks are in order. First, note that it is not always possible to
find a simple graph that exactly match a given eigenvalue spectrum. Second,
the spectral pseudometric $d_{K}\left( S\left( \mathcal{G}\right) ,S^{\ast
}\right) $ may present multiple minima for a given $S^{\ast }$. These minima
could correspond, for example, to several isospectral graphs matching the
desired spectrum $S^{\ast }$ \cite{CDS98}. Therefore, iteration (\ref%
{Optimization Step}) may converge to different isospectral graphs depending
on the initial condition $\mathcal{G}_{0}$. Third, iteration (\ref%
{Optimization Step}) finds the most beneficial edge addition/deletion in
each time step, hence, this greedy approach may get trapped in a local
minimum. In practice, we observe that in our numerical simulations the
spectra of these local minima are remarkably close to those of the desired
spectrum.
\end{remark}


\section{Moment-Based Analysis of the Laplacian Matrix}

\label{Moments from Metrics}

In this section, we use tools from algebraic graph theory to compute the
spectral moments of the Laplacian matrix of $\mathcal{G}$ when only the set
of local Laplacian submatrices $\left\{ L_{i,r}\text{, }i\in \mathcal{V}%
\right\} $ is available. As a result of our analysis, we propose a
decentralized algorithm to compute a truncated sequence of Laplacian
spectral moments via a single distributed averaging. Furthermore, we also
present an efficient approach to compute the effect of adding or deleting an
edge in the Laplacian spectral moments of the graph. Particularly useful in
our derivations will be the following result from algebraic graph theory 
\cite{Big93}:

\begin{lemma}
\label{Diagonals as Walks}Let $\mathcal{H=}\left( \mathcal{V},\mathcal{E},%
\mathcal{W}\right) $ be a weighted graph with weighted adjacency matrix $W_{%
\mathcal{H}}=\left[ w_{ij}\right] $. Then%
\begin{equation*}
\left[ W_{\mathcal{H}}^{k}\right] _{ii}=\sum_{p\in P_{i,k}\left( \mathcal{H}%
\right) }\omega \left( p\right) ,
\end{equation*}%
where $P_{i,k}\left( \mathcal{H}\right) $ is the set of closed walks of
length $k$ starting and finishing at node $i$ in the weighted graph $%
\mathcal{H}$.
\end{lemma}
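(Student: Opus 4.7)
The plan is to prove this by direct expansion of matrix multiplication, using the defining property that $w_{ij}=0$ whenever $(i,j)\notin\mathcal{E}$ so that non-edges contribute nothing to any product.

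First, I would write out the $(i,i)$ entry of $W_{\mathcal{H}}^{k}$ by iterating the standard matrix product formula:
\begin{equation*}
[W_{\mathcal{H}}^{k}]_{ii}=\sum_{i_{1},i_{2},\dots,i_{k-1}\in\mathcal{V}}w_{i\,i_{1}}\,w_{i_{1}\,i_{2}}\cdots w_{i_{k-1}\,i}.
\end{equation*}
Each term in this sum is indexed by an ordered tuple $(i,i_{1},\dots,i_{k-1},i)$ of $k+1$ nodes in $\mathcal{V}$, i.e., a potential closed sequence of length $k$ starting and ending at $i$.

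Next, I would invoke the key observation that $w_{uv}=0$ unless $(u,v)\in\mathcal{E}$, i.e., unless $u\sim v$. Consequently, a term in the above sum is nonzero only when every consecutive pair $(i,i_{1}),(i_{1},i_{2}),\dots,(i_{k-1},i)$ is an edge of $\mathcal{H}$. By the definition of a walk given in Section~\ref{sec_local_structural_properties}, such tuples are precisely the closed walks of length $k$ starting and finishing at node $i$, that is, the elements of $P_{i,k}(\mathcal{H})$.

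Finally, I would recognize that for each surviving tuple $p=(i,i_{1},\dots,i_{k-1},i)\in P_{i,k}(\mathcal{H})$, the product $w_{i\,i_{1}}w_{i_{1}\,i_{2}}\cdots w_{i_{k-1}\,i}$ is exactly the walk weight $\omega(p)$ as defined in Section~\ref{sec_local_structural_properties}. Substituting yields the claimed identity
\begin{equation*}
[W_{\mathcal{H}}^{k}]_{ii}=\sum_{p\in P_{i,k}(\mathcal{H})}\omega(p).
\end{equation*}
There is no real obstacle here; the argument is essentially a bookkeeping identification between index tuples in the expanded matrix product and closed walks in the graph. The only subtlety worth mentioning is the convention for $k=0$ (where $[W_{\mathcal{H}}^{0}]_{ii}=1$ corresponds to the empty walk at $i$) and $k=1$ (where the sum picks up $w_{ii}$, which is zero for a simple graph and nonzero only if self-loops are permitted), but both are consistent with the stated formula. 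Alternatively, one can package the argument as a quick induction on $k$ using $W_{\mathcal{H}}^{k}=W_{\mathcal{H}}^{k-1}\,W_{\mathcal{H}}$, concatenating a closed walk of length $k-1$ ending at some neighbor $j$ of $i$ with the edge $(j,i)$; the multiplicativity of $\omega$ along walk concatenation makes the inductive step immediate.
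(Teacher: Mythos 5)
Your proof is correct. Note that the paper itself gives no proof of this lemma---it is stated as a known result and attributed to Biggs' \emph{Algebraic Graph Theory}---so there is no in-paper argument to compare against; your direct expansion of $\left[ W_{\mathcal{H}}^{k}\right]_{ii}$ as a sum over index tuples, with the observation that $w_{uv}=0$ for non-edges kills every tuple that is not a closed walk and that the surviving products are exactly the walk weights $\omega(p)$, is precisely the standard textbook argument being cited. Your remark about self-loops is also pertinent here, since the paper later applies the lemma to the weighted graph $\mathcal{L}(\mathcal{G})$, which carries self-loops of weight $d_i$; the expansion handles these without modification.
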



\subsection{Algebraic Analysis of Structured Matrices\label{Algebraic
Analysis}}

Consider the symmetric Laplacian matrix $L_{\mathcal{G}}$ of a simple graph $%
\mathcal{G=}\left( \mathcal{V},\mathcal{E}\right) $. We denote by $\mathcal{G%
}_{i,r}=\left( \mathcal{N}_{i,r},\mathcal{E}_{i,r}\right) $ the neighborhood
of radius $r$ around node $i$ and define the \emph{local Laplacian submatrix}
$L_{i,r}$, as the submatrix $L_{\mathcal{G}}\left( \mathcal{N}_{i,r}\right) $%
, formed by selecting the rows and columns of $L_{\mathcal{G}}$ indexed by
the set of nodes $\mathcal{N}_{i,r}$. By convention, we associate the first
row and column of the submatrix $L_{i,r}$ with node $i\in \mathcal{V}$,
which can be done via a simple permutation of rows and columns.\footnote{%
Notice that permuting the rows and columns of the Laplacian matrix does not
change the topology of the underlying graph.} For a simple graph $\mathcal{G}
$ with Laplacian matrix $L_{\mathcal{G}}$, we define $\mathcal{L}\left( 
\mathcal{G}\right) $ as the weighted graph whose adjacency matrix is equal
to $L_{\mathcal{G}}$. In other words, $\mathcal{L}\left( \mathcal{G}\right) $
has edges with weight $-1$ for $\left( i,j\right) \in \mathcal{E}\left( 
\mathcal{G}\right) $, $0$ for $\left( i,j\right) \not\in \mathcal{E}\left( 
\mathcal{G}\right) $, and $d_{i}$ for all self-loops $\left( i,i\right) $, $%
i\in \mathcal{V}\left( \mathcal{G}\right) $. We also define $\mathcal{H}%
_{i,r}$ as the weighted subgraph of $\mathcal{L}\left( \mathcal{G}\right) $
with node set $\mathcal{N}_{i,r}$, containing all the edges of $\mathcal{L}%
\left( \mathcal{G}\right) $ connecting pairs of nodes in $\mathcal{N}_{i,r}$
(including self-loops). Notice that, according to this definition, the
weighted adjacency matrix of $\mathcal{H}_{i,r}$ is equal to $L_{i,r}$.

In this paper, we assume that each agent in the network knows the structure
of its local neighborhood $\mathcal{G}_{i,r}$, for a fixed $r$. Therefore,
agent $i$ has access to the local Laplacian submatrix $L_{i,r}$. The
following results allows us aggregate information from the set of local
Laplacian submatrices, $\left\{ L_{i,r}\right\} _{i\in \mathcal{V}}$, to
compute a sequence of spectral moments of the (global) Laplacian matrix $L_{%
\mathcal{G}}$.

\begin{theorem}
\label{Moments from Subgraphs}Consider a simple graph $\mathcal{G}$ with
Laplacian matrix $L_{\mathcal{G}}$. Then, for a given radius $r$, the
Laplacian spectral moments can be written as%
\begin{equation}
m_{k}\left( L_{\mathcal{G}}\right) =\frac{1}{n}\sum_{i=1}^{n}\left[
L_{i,r}^{k}\right] _{11},  \label{Moments as Sum of Traces}
\end{equation}%
for $k\leq K=2r+1$.
\end{theorem}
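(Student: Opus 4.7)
The plan is to reduce the moment identity to an identity about closed walks, via the standard trace formula and Lemma \ref{Diagonals as Walks}, and then invoke a locality argument to show that sufficiently short closed walks cannot ``see'' outside the radius-$r$ neighborhood.

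First I would write
\begin{equation*}
m_{k}(L_{\mathcal{G}}) \;=\; \frac{1}{n}\operatorname{tr}(L_{\mathcal{G}}^{k}) \;=\; \frac{1}{n}\sum_{i=1}^{n}\bigl[L_{\mathcal{G}}^{k}\bigr]_{ii},
\end{equation*}
so the claim reduces to showing the pointwise identity $[L_{\mathcal{G}}^{k}]_{ii}=[L_{i,r}^{k}]_{11}$ for every $i$ and every $k\le 2r+1$. Since $L_{\mathcal{G}}$ is the weighted adjacency matrix of $\mathcal{L}(\mathcal{G})$ and $L_{i,r}$ is the weighted adjacency matrix of the subgraph $\mathcal{H}_{i,r}$, Lemma~\ref{Diagonals as Walks} rewrites both sides as sums of walk weights:
\begin{equation*}
\bigl[L_{\mathcal{G}}^{k}\bigr]_{ii}=\sum_{p\in P_{i,k}(\mathcal{L}(\mathcal{G}))}\omega(p), \qquad \bigl[L_{i,r}^{k}\bigr]_{11}=\sum_{p\in P_{i,k}(\mathcal{H}_{i,r})}\omega(p),
\end{equation*}
where the first-row-and-column convention for $L_{i,r}$ identifies node $1$ in $\mathcal{H}_{i,r}$ with the agent $i$ in $\mathcal{L}(\mathcal{G})$.

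The core step is to show that the two sets of closed walks are identical. Since $\mathcal{H}_{i,r}$ is an induced weighted subgraph of $\mathcal{L}(\mathcal{G})$ that retains every self-loop weight $d_{j}$ for $j\in\mathcal{N}_{i,r}$, any closed walk of length $k$ starting at $i$ that is entirely contained in $\mathcal{N}_{i,r}$ appears in both $P_{i,k}(\mathcal{L}(\mathcal{G}))$ and $P_{i,k}(\mathcal{H}_{i,r})$ with the same weight $\omega(p)$. So it suffices to verify that every closed walk $p\in P_{i,k}(\mathcal{L}(\mathcal{G}))$ with $k\le 2r+1$ is confined to $\mathcal{N}_{i,r}$. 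For any node $j$ visited after $s$ steps, we have $\delta(i,j)\le s$ in $\mathcal{G}$ (self-loops only stall the walk, they do not extend reachability), and to return to $i$ at step $k$ requires at least $\delta(i,j)$ further steps, giving $\delta(i,j)\le k-s$. Hence $\delta(i,j)\le \lfloor k/2\rfloor \le \lfloor(2r+1)/2\rfloor = r$, placing every visited node in $\mathcal{N}_{i,r}$.

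The mildly delicate point, and the only place I would slow down, is handling self-loops in $\mathcal{L}(\mathcal{G})$ correctly: one must check that a step along a self-loop $(j,j)$ is legitimately counted as an edge traversal in the walk (so that the diagonal entries $d_{j}$ enter with the right multiplicity) while simultaneously noting that it contributes zero to the graph distance from $i$, so the walk-reach bound above still holds. Once this is verified, summing the pointwise equality $[L_{\mathcal{G}}^{k}]_{ii}=[L_{i,r}^{k}]_{11}$ over $i=1,\ldots,n$ and dividing by $n$ yields \eqref{Moments as Sum of Traces} for $k\le 2r+1$.
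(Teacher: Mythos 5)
Your proof is correct and follows essentially the same route as the paper's: the trace formula, Lemma~\ref{Diagonals as Walks} applied to both $L_{\mathcal{G}}$ and $L_{i,r}$, and the observation that a closed walk of length $k\le 2r+1$ starting at $i$ cannot leave $\mathcal{N}_{i,r}$. Your explicit bound $\delta(i,j)\le\min(s,k-s)\le\lfloor k/2\rfloor$ and your remark that self-loops of $\mathcal{L}(\mathcal{G})$ count as walk steps yet do not extend reachability are, if anything, slightly more careful than the paper's own treatment of the same step.
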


\begin{proof}
Since the trace of a matrix is the sum of its eigenvalues, we can expand the 
$k$-th spectral moment of the Laplacian matrix as follows:%
\begin{eqnarray*}
m_{k}\left( L_{\mathcal{G}}\right) &=&\frac{1}{n}\text{Trace}\left( L_{%
\mathcal{G}}^{k}\right) \\
&=&\frac{1}{n}\sum_{i=1}^{n}\left[ L_{\mathcal{G}}^{k}\right] _{ii}
\end{eqnarray*}%
Therefore, since $L_{\mathcal{G}}$ is the weighted adjacency matrix of the
Laplacian graph $\mathcal{L}\left( \mathcal{G}\right)$, we have (from Lemma %
\ref{Diagonals as Walks})%
\begin{equation}
m_{k}\left( L_{\mathcal{G}}\right) =\frac{1}{n}\sum_{i=1}^{n}\sum_{p\in
P_{i,k}\left( \mathcal{L}\left( \mathcal{G}\right) \right) }\omega \left(
p\right) ,  \label{Moments as Walks in Laplacian Graph}
\end{equation}%
where the weights $\omega \left( p\right) $ are summed over the set of
closed walks of length $k$ starting at node $i$ in the weighted graph $%
\mathcal{L}\left( \mathcal{G}\right) $.

For a fixed value of $k$, closed walks of length $k$ in $\mathcal{L}\left( 
\mathcal{G}\right)$ starting at node $i$ can only touch nodes within a
certain distance $r\left( k\right) $ of $i$, where $r\left( k\right) $ is a
function of $k$ (see Fig. \ref{Walks in Gir}). In particular, for $k$ even
(resp. odd), a closed walk of length $k$ starting at node $i$ can only touch
nodes at most $k/2$ $\,$(resp. $\left\lfloor k/2\right\rfloor $) hops away
from $i$. Therefore, closed walks of length $k$ starting at $i$ are always
contained within the neighborhood of radius $\left\lfloor k/2\right\rfloor $%
. In other words, the neighborhood $\mathcal{G}_{i,r}$ of radius $r$
contains all closed walks of length up to $2r+1$ starting at node $i$.
Therefore, for $k\leq 2r+1 $, we have that 
\begin{equation*}
\sum_{p\in P_{i,k}\left( \mathcal{L} \right) }\omega \left( p\right)
=\sum_{p\in P_{1,k}\left( \mathcal{H}_{i,r} \right) }\omega \left( p\right) ,
\end{equation*}%
where $\mathcal{H}_{i,r} $ is the weighted graph whose adjacency matrix is
equal to the local Laplacian submatrix $L_{i,r}$ (notice that, by
convention, we associate the first row and column of $L_{i,r}$\ with node $i$%
). Therefore, according to Lemma \ref{Diagonals as Walks}, we have 
\begin{equation}
\sum_{p\in P_{1,k}\left( \mathcal{H}\left( L_{i,r}\right) \right) }\omega
\left( p\right) =\left[ L_{i,r}^{k}\right] _{1,1}.
\label{Walks as Diagonal of Sublaplacian}
\end{equation}%
Then, substituting (\ref{Walks as Diagonal of Sublaplacian}) into (\ref%
{Moments as Walks in Laplacian Graph}), we obtain the statement of our
Theorem.
\end{proof}

\begin{remark}[Distributed computation of spectral moments]
Since every node $i$ has access to its local neighborhood $\mathcal{G}_{i,r}$%
, it is possible to compute the first $2r+1$ moments via a simple
distributed averaging of the quantities $\{\left[ L_{i,r}^{k}\right]
_{1,1}\}_{i\in \mathcal{V}}$, \cite{L97}. This averaging efficiently
aggregates local pieces of local structural information (described by the
local Laplacian submatrices) to produce a truncated sequence of spectral
moments of the (global) Laplacian matrix. This is an useful result for the
analysis of complex networks for which retrieving the complete structure of
the network can be very challenging (in many cases, not even possible).
\end{remark}

\begin{figure}[tbp]
\centering\includegraphics[width=0.65\linewidth]{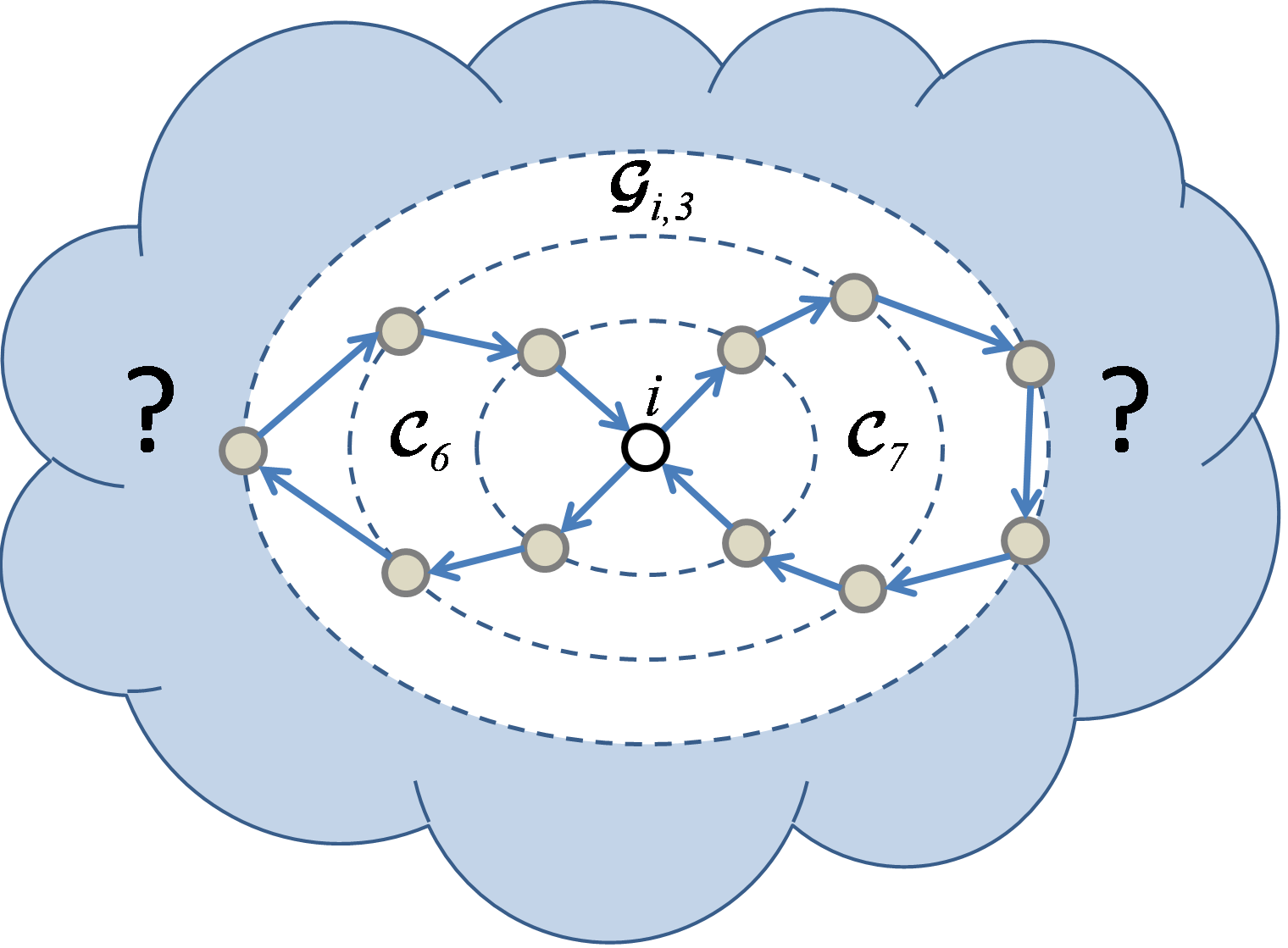}
\caption{Cycles $\mathcal{C}_{6}$ and $\mathcal{C}_{7}$, of lengths $6$ and $%
7\,$, in a neighborhood of radius $3$ around node \thinspace $i\,$.}
\label{Walks in Gir}
\end{figure}

Based on Theorem \ref{Moments from Subgraphs}, we propose a distributed
algorithm to compute a sequence of $2r+1$ spectral moments of $L_{\mathcal{G}%
}$ from local submatrices $L_{i,r}$, as described in Algorithm \ref%
{alg:moment_comp}. Note that, computing the spectral moments via (\ref%
{Moments as Sum of Traces})\ is much more efficient than computing these
moments via an explicit eigenvalue decomposition for many real-world
networks. In most real applications, the Laplacian matrix representing the
network structure is a sparse graph for which the number of nodes in the
neighborhood $\mathcal{N}_{i,r}$ is very small compared to $n$, for moderate
values of $r$.

\begin{algorithm}[t]
\caption{Decentralized moment computation} \label{alg:moment_comp}
\begin{algorithmic}[1]
\REQUIRE Local Laplacian submatrices $L_{i,r}$ for all nodes $i\in V$;%
\STATE Each node $i\in V$ computes a vector $\mathbf{\mu }%
_{i}\triangleq \left( \mu_{i,1},\mu_{i,2},\mu_{i,3},...,\mu_{i,2r+1}\right) ^{T}$, where $%
\mu_{i,k}\triangleq \left[ L_{i,r}^{k}\right]_{1,1} $;%
\STATE Using distributed averaging, compute the following
vector of averages:%
\begin{eqnarray*}
\mathbf{m}_{2r+1}\left( L_{\mathcal{G}}\right)  &\triangleq &\frac{1}{n}\sum_{i=1}^{n}%
\mathbf{\mu }_{i} \\
&=&\frac{1}{n}\sum_{i=1}^{n}\left( 0,\mu_{i,2},\mu_{i,3},...,\mu_{i,2r+1}\right)
^{T} \\
&=&\left( m_{1}\left( L_{\mathcal{G}}\right) ,m_{2}\left( L_{\mathcal{G}}\right) ,...,m_{2r+1}\left(
L_{\mathcal{G}}\right) \right) ^{T}.
\end{eqnarray*}
\end{algorithmic}
\end{algorithm}


\subsection{Moment-Based Perturbation Analysis}

\label{Perturbation Analysis}

In this section, we use spectral graph theory to compute the effect of
adding or deleting an edge on the spectral moments of the Laplacian matrix.
Traditionally, the effect of a matrix perturbation on the eigenvalue
spectrum is analyzed using eigenvalue perturbation techniques \cite{Wil65}.
In particular, the effect of adding a `small' perturbation matrix $\delta W$
to an $n\times n$ symmetric matrix $W$ with eigenvalue spectrum $\left\{
\sigma _{k}\right\} _{k=1}^{n}$ can be approximated, in the first-order, by 
\cite{Wil65} 
\begin{equation*}
\widetilde{\sigma }_{k}-\sigma _{k}\approx u_{k}^{T}~\delta W~u_{k},
\end{equation*}%
where $u_{k}$ is the eigenvector of $W$ associated with the eigenvalue $%
\sigma _{k}$, and $\left\{ \widetilde{\sigma }_{k}\right\} _{k=1}^{n}$ is
the eigenvalue spectrum of the perturbed matrix $W+\delta W$. In the case of
the Laplacian matrix, the perturbation matrix $\delta W$ corresponding to
the addition of an edge $\left( i,j\right) $ can be written as, $\delta
W=\left( e_{i}-e_{j}\right) \left( e_{i}-e_{j}\right) ^{T}$, where $e_{i}$
is the unit vector in the direction of the $i$-th coordinate. We denote by $%
\mathcal{G}+\left( i,j\right) $ the graph resulting of adding edge $\left(
i,j\right) $ to $\mathcal{G}$, and $\{\widetilde{\lambda }_{k}\}_{k=1}^{n}$
is the Laplacian spectrum of $\mathcal{G}+\left( i,j\right) $. Therefore,
adding edge $\left( i,j\right) $ perturbs the eigenvalues of the Laplacian
matrix as follows:%
\begin{eqnarray*}
\widetilde{\lambda }_{k}-\lambda _{k} &\approx &v_{k}^{T}\left(
e_{i}-e_{j}\right) \left( e_{i}-e_{j}\right) ^{T}v_{k} \\
&=&\left( v_{k,i}-v_{k,j}\right) ^{2},
\end{eqnarray*}%
where $v_{k}$ is the eigenvector of\ $L_{\mathcal{G}}$ associated to $%
\lambda _{k}$, and $v_{k,j}$ is its $j$-th component. Hence, the resulting
spectral radius can be approximated as%
\begin{equation*}
\widetilde{\lambda }_{1}\approx \lambda _{1}+\left( v_{1,i}-v_{1,j}\right)
^{2}.
\end{equation*}

Therefore, computing the effect of an edge addition on the spectral radius
using traditional perturbation techniques requires computation of the
dominant eigenvalue and eigenvector of $L_{\mathcal{G}}$, which is
computationally expensive for very large graphs. As an alternative to the
traditional analysis, we propose a novel approach, based on algebraic graph
theory, to compute the effect of structural perturbation on the spectral
moments of the Laplacian matrix $L_{\mathcal{G}}$ \emph{without explicitly
computing the eigenvalues or eigenvectors of }$L_{\mathcal{G}}$.
Furthermore, our approach can be efficiently implemented in a fully
decentralized manner.

In our derivations, we use the following result from algebraic graph theory:

\begin{lemma}
\label{Moments as Walks}Let $\mathcal{H=}\left( \mathcal{V},\mathcal{E},%
\mathcal{W}\right) $ be a weighted graph with weighted adjacency matrix $W_{%
\mathcal{H}}=\left[ w_{ij}\right] $. Then%
\begin{equation}
m_{k}\left( W_{\mathcal{H}}\right) =\frac{1}{n}\sum_{p\in P_{k}\left( 
\mathcal{H}\right) }\omega \left( p\right) ,  \label{Eq Moments as Walks}
\end{equation}%
where $P_{k}\left( \mathcal{H}\right) $ is the set of closed walks of length 
$k$ in the weighted graph $\mathcal{H}$.
\end{lemma}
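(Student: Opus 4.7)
The proof will be a short derivation that combines the trace characterization of spectral moments with the walk interpretation of matrix powers already established in Lemma \ref{Diagonals as Walks}.

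First I would invoke the definition of the spectral moment together with the fact that the trace of a matrix equals the sum of its eigenvalues, writing
\begin{equation*}
m_{k}\left( W_{\mathcal{H}}\right) = \frac{1}{n}\sum_{i=1}^{n}\sigma_{i}^{k} = \frac{1}{n}\operatorname{Trace}\left( W_{\mathcal{H}}^{k}\right) = \frac{1}{n}\sum_{i=1}^{n}\left[ W_{\mathcal{H}}^{k}\right]_{ii}.
\end{equation*}
Here I am implicitly using that $W_{\mathcal{H}}$ is symmetric (since $\mathcal{H}$ is undirected), so that its eigenvalues are real and well-defined, and the spectral moment in (\ref{Moments Definition}) coincides with $\frac{1}{n}\operatorname{Trace}(W_{\mathcal{H}}^{k})$.

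Next I would apply Lemma \ref{Diagonals as Walks} to each diagonal entry, replacing $\left[ W_{\mathcal{H}}^{k}\right]_{ii}$ by $\sum_{p\in P_{i,k}(\mathcal{H})}\omega(p)$, where $P_{i,k}(\mathcal{H})$ is the set of closed walks of length $k$ based at node $i$. This yields
\begin{equation*}
m_{k}\left( W_{\mathcal{H}}\right) = \frac{1}{n}\sum_{i=1}^{n}\sum_{p\in P_{i,k}(\mathcal{H})}\omega(p).
\end{equation*}

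The remaining step is a combinatorial bookkeeping argument: every closed walk of length $k$ in $\mathcal{H}$ has a uniquely determined starting node, so the sets $\{P_{i,k}(\mathcal{H})\}_{i=1}^{n}$ partition $P_{k}(\mathcal{H})$. Hence $\sum_{i=1}^{n}\sum_{p\in P_{i,k}(\mathcal{H})}\omega(p) = \sum_{p\in P_{k}(\mathcal{H})}\omega(p)$, which gives (\ref{Eq Moments as Walks}). There is no real obstacle; the only point worth being careful about is this disjoint-union identification, and the convention that a closed walk is identified with its ordered list of vertices (so that a cycle traversed from different starting points contributes multiple walks, consistent with their appearance as diagonal contributions at different nodes $i$).
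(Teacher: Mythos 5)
Your proof is correct and follows essentially the same route as the paper's: both pass from $m_{k}(W_{\mathcal{H}})=\frac{1}{n}\operatorname{Trace}(W_{\mathcal{H}}^{k})$ to the diagonal entries, apply Lemma \ref{Diagonals as Walks}, and then identify $\sum_{i}\sum_{p\in P_{i,k}(\mathcal{H})}$ with the sum over $P_{k}(\mathcal{H})=\cup_{i}P_{i,k}(\mathcal{H})$. Your extra remark about closed walks being identified with their ordered vertex lists (so the union is effectively disjoint by starting node) is a welcome clarification but does not change the argument.
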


\begin{proof}
This lemma is a consequence of Lemma \ref{Diagonals as Walks}. Specifically,
we have that%
\begin{eqnarray*}
m_{k}\left( W\left( \mathcal{H}\right) \right) &=&\frac{1}{n}\text{Trace}%
\left( W\left( \mathcal{H}\right) ^{k}\right) \\
&=&\frac{1}{n}\sum_{i\in \mathcal{V}}\left[ W\left( \mathcal{H}\right) ^{k}%
\right] _{i,i} \\
&=&\frac{1}{n}\sum_{i\in \mathcal{V}}\sum_{p\in P_{i,k}\left( \mathcal{H}%
\right) }\omega \left( p\right) \\
&=&\frac{1}{n}\sum_{p\in P_{k}\left( \mathcal{H}\right) }\omega \left(
p\right) ,
\end{eqnarray*}%
where $P_{k}\left( \mathcal{H}\right) \triangleq \cup_{i\in \mathcal{V}%
}P_{i,k}\left( \mathcal{H}\right) $ is the set of all closed walks of length 
$k$ in $\mathcal{H}$ (for any starting node $i\in \mathcal{V} $).
\end{proof}


\subsection{Perturbation on the Spectral Moments}

Consider a simple graph $\mathcal{G}$ with Laplacian matrix $L_{\mathcal{G}}$%
. We denote by $\mathcal{G}+\left( i,j\right) $ (resp. $\mathcal{G}-\left(
i,j\right) $) the graph resulting from adding (resp. removing) edge $\left(
i,j\right) $ to (resp. from) $\mathcal{G}$. Consider the sets of nodes $%
\mathcal{N}_{i,r}$ and $\mathcal{N}_{j,r}$ being within a radius $r$ from
node $i$ and node $j$, respectively. Let us define the following submatrices
indexed by the set of nodes in $\mathcal{N}_{i,r}\cup \mathcal{N}_{j,r}$:%
\begin{eqnarray*}
U_{r,\left( i,j\right) } &\triangleq &L_{\mathcal{G}}\left( \mathcal{N}%
_{i,r}\cup \mathcal{N}_{j,r}\right) , \\
U_{r,\left( i,j\right) }^{+} &\triangleq &L_{\mathcal{G}+\left( i,j\right)
}\left( \mathcal{N}_{i,r}\cup \mathcal{N}_{j,r}\right) , \\
U_{r,\left( i,j\right) }^{-} &\triangleq &L_{\mathcal{G}-\left( i,j\right)
}\left( \mathcal{N}_{i,r}\cup \mathcal{N}_{j,r}\right) .
\end{eqnarray*}
The following lemma allows us to efficiently compute the increment (resp.
decrement) in the Laplacian spectral moments of $\mathcal{G}$ due to the
addition (resp. removal) of edge $\left( i,j\right) $:

\begin{theorem}
\label{Moment Perturbation for Removal}Given a simple graph $\mathcal{G}$
with Laplacian matrix $L_{\mathcal{G}}$, the increment (decrement) in the $k$%
-th Laplacian spectral moment of a graph $\mathcal{G}$ due to the addition
or deletion of an edge $\left( i,j\right) $ can be written as%
\begin{equation}
m_{k}\left( L_{\mathcal{G}\pm \left( i,j\right) }\right) -m_{k}\left( L_{%
\mathcal{G}}\right) =\frac{1}{n}\left( \text{Trace}\left( U_{r,\left(
i,j\right) }^{\pm }\right) ^{k}-\text{Trace}\left( U_{r,\left( i,j\right)
}\right) ^{k}\right) ,  \label{Eq for Edge Removal}
\end{equation}%
for $k\leq 2r+1$.
\end{theorem}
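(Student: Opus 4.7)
The plan is to apply Lemma~\ref{Moments as Walks} on both sides and reduce the identity to a statement about closed walks, exploiting the fact that the weighted graphs $\mathcal{L}(\mathcal{G})$ and $\mathcal{L}(\mathcal{G}\pm(i,j))$ differ only in a very localized way. Specifically, passing from $\mathcal{L}(\mathcal{G})$ to $\mathcal{L}(\mathcal{G}+(i,j))$ changes the self-loop weight at $i$ by $+1$, the self-loop weight at $j$ by $+1$, and the weight of edge $(i,j)$ from $0$ to $-1$ (and analogously with the signs flipped for the deletion). Any closed walk whose total weight is not identical in both weighted graphs must therefore traverse at least one of these three modified elements, which forces the walk to touch $i$ or $j$.

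First, I would write
\begin{equation*}
m_k(L_{\mathcal{G}\pm(i,j)}) - m_k(L_{\mathcal{G}}) = \frac{1}{n}\bigl(\mathrm{Trace}(L_{\mathcal{G}\pm(i,j)}^k) - \mathrm{Trace}(L_{\mathcal{G}}^k)\bigr)
\end{equation*}
and then, via Lemma~\ref{Moments as Walks}, express each trace as a sum of walk-weights over the corresponding sets of closed walks $P_k(\mathcal{L}(\mathcal{G}))$ and $P_k(\mathcal{L}(\mathcal{G}\pm(i,j)))$. Walks that touch neither $i$ nor $j$ contribute identically on both sides and cancel, so the difference of traces equals the signed sum of $\omega(p)$ over the closed walks of length $k$ that touch $\{i,j\}$.

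Next, I would invoke the locality argument. In any closed walk of length $k$, any two touched vertices are at graph-distance at most $\lfloor k/2\rfloor$ from one another (since one can reach either vertex from the other along the walk in one of the two directions, whichever is shorter). For $k\le 2r+1$ this gives a distance bound of $r$, so every closed walk of length $k$ touching $i$ is contained in $\mathcal{N}_{i,r}$ and every one touching $j$ is contained in $\mathcal{N}_{j,r}$. In particular, all such walks live inside the induced weighted subgraph of $\mathcal{L}(\mathcal{G})$ (respectively $\mathcal{L}(\mathcal{G}\pm(i,j))$) on $\mathcal{N}_{i,r}\cup\mathcal{N}_{j,r}$, whose weighted adjacency matrix is exactly $U_{r,(i,j)}$ (respectively $U_{r,(i,j)}^{\pm}$).

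Finally, I would close the loop by applying Lemma~\ref{Moments as Walks} in the other direction to the submatrices: $\mathrm{Trace}(U_{r,(i,j)}^k)$ and $\mathrm{Trace}((U_{r,(i,j)}^{\pm})^k)$ equal the sums of $\omega(p)$ over \emph{all} closed walks of length $k$ in the two weighted subgraphs. The walks inside these subgraphs that do not touch $i$ or $j$ use only edges and self-loops not modified by the perturbation, hence they contribute equally to both traces and cancel in the difference. What remains on both sides of the claimed identity is precisely the signed contribution of the closed walks of length $k$ inside $\mathcal{N}_{i,r}\cup\mathcal{N}_{j,r}$ that touch $i$ or $j$, which by the previous paragraph matches the LHS. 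The main subtlety to get right is exactly this bookkeeping: verifying that the ``non-touching'' walks cancel both globally (in the full trace difference) and locally (in the submatrix trace difference), and that the ``touching'' walks in the full graph are in bijection with the touching walks in the submatrix. The tight length bound $k\le 2r+1$ is what makes this bijection hold.
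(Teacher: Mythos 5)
Your proposal is correct and follows essentially the same route as the paper's proof: apply Lemma~\ref{Moments as Walks} to both the full Laplacian graphs and the induced subgraphs on $\mathcal{N}_{i,r}\cup\mathcal{N}_{j,r}$, use the fact that closed walks of length $k\le 2r+1$ touching $i$ or $j$ cannot leave that union, and cancel the walks unaffected by the perturbation. The only difference is cosmetic bookkeeping (you cancel non-touching walks on both the global and local sides, while the paper splits each sum into ``contained'' and ``not contained'' parts), so no further comment is needed.
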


\begin{proof}
Consider the weighted Laplacian graphs of $L_{\mathcal{G}}$, $L_{\mathcal{G}%
+\left( i,j\right) }$, and $L_{\mathcal{G}-\left( i,j\right) }$, which we
denote by $\mathcal{H}\triangleq \mathcal{L}\left( \mathcal{G}\right) $, $%
\mathcal{H}^{+}\triangleq \mathcal{L}\left( \mathcal{G}+\left( i,j\right)
\right) $ and $\mathcal{H}^{-}\triangleq \mathcal{L}\left( \mathcal{G}%
-\left( i,j\right) \right) $, respectively. (By definition, the adjacency
matrices of the Laplacian graphs are the Laplacian matrices of the graphs.)
Then, according to Lemma \ref{Moments as Walks}, we have that the $k$-th
spectral moments $m_{k}\left( L_{\mathcal{G}}\right) $, $m_{k}\left( L_{%
\mathcal{G}+\left( i,j\right) }\right) $ and $m_{k}\left( L_{\mathcal{G}%
-\left( i,j\right) }\right) $ can be written as weighted sums over the sets
of all closed walks of length $k$ in $\mathcal{H}$, $\mathcal{H}^{+}$, and $%
\mathcal{H}^{-}$, as follows,%
\begin{eqnarray*}
m_{k}\left( L_{\mathcal{G}}\right) &=&\frac{1}{n}\sum_{p\in P_{k}\left( 
\mathcal{H}\right) }\omega \left( p\right) , \\
m_{k}\left( L_{\mathcal{G}\pm \left( i,j\right) }\right) &=&\frac{1}{n}%
\sum_{p\in P_{k}\left( \mathcal{H}^{\pm }\right) }\omega \left( p\right) .
\end{eqnarray*}

We define $P_{k,r}^{\left( i,j\right) }\left( \mathcal{H}\right) $, $%
P_{k,r}^{\left( i,j\right) }\left( \mathcal{H}^{+}\right) $, and $%
P_{k,r}^{\left( i,j\right) }\left( \mathcal{H}^{-}\right) $ as the sets of
closed walks of length $k$ in, respectively, $\mathcal{H}$, $\mathcal{H}^{+}$%
, and $\mathcal{H}^{-}$ visiting only nodes in the set $\mathcal{N}%
_{i,r}\cup \mathcal{N}_{j,r}$. Then, we can split the summation in (\ref{Eq
Moments as Walks}) for the Laplacian matrices, as follows:%
\begin{eqnarray}
m_{k}\left( L_{\mathcal{G}}\right) &=&\frac{1}{n}\sum_{p\in P_{k,r}^{\left(
i,j\right) }\left( \mathcal{H}\right) }\omega \left( p\right) +\frac{1}{n}%
\sum_{p\in P_{k}\backslash P_{k,r}^{\left( i,j\right) }\left( \mathcal{H}%
\right) }\omega \left( p\right) ,  \label{Moment H} \\
m_{k}\left( L_{\mathcal{G}\pm \left( i,j\right) }\right) &=&\frac{1}{n}%
\sum_{p\in P_{k,r}^{\left( i,j\right) }\left( \mathcal{H}^{\pm }\right)
}\omega \left( p\right) +\frac{1}{n}\sum_{p\in P_{k}\backslash
P_{k,r}^{\left( i,j\right) }\left( \mathcal{H}^{\pm }\right) }\omega \left(
p\right) .  \label{Moment H+}
\end{eqnarray}

Notice that, as we illustrated in Fig. \ref{Walks in Gir}, none of the
closed walk of length $k\leq 2r+1$ touching node $i$ (resp. node $j$) can
leave the neighborhood $\mathcal{N}_{i,r}$ (resp. $\mathcal{N}_{j,r}$).
Therefore, all closed walks of length $k\leq 2r+1$ touching either node $i$
or $j$ (or both) are contained\footnote{%
We say that a walk is \emph{contained} in a set of nodes $N$ if it only
touches nodes in $N$.} in $\mathcal{N}_{i,r}\cup \mathcal{N}_{j,r}$. As a
consequence, none of the closed walks in $P_{k}\backslash P_{k}^{\left(
i,j\right) }\left( \mathcal{H}\right) $ or $P_{k}\backslash P_{k}^{\left(
i,j\right) }\left( \mathcal{H}^{\pm }\right) $ touches node $i$ or $j$.
Since addition/removal of edge $\left( i,j\right) $ does not influence those
walks not touching $i$ or $j$, we have that%
\begin{equation*}
\sum_{p\in P_{k}\backslash P_{k,r}^{\left( i,j\right) }\left( \mathcal{H}%
\right) }\omega \left( p\right) =\frac{1}{n}\sum_{p\in P_{k}\backslash
P_{k,r}^{\left( i,j\right) }\left( \mathcal{H}^{\pm }\right) }\omega \left(
p\right) .
\end{equation*}%
Thus, from (\ref{Moment H})\ and (\ref{Moment H+}) we have%
\begin{equation}
m_{k}\left( L_{\mathcal{G}^{\pm }\left( i,j\right) }\right) -m_{k}\left( L_{%
\mathcal{G}}\right) =\frac{1}{n}\sum_{p\in P_{k,r}^{\left( i,j\right)
}\left( \mathcal{H}^{^{\pm }}\right) }\omega \left( p\right) -\frac{1}{n}%
\sum_{p\in P_{k,r}^{\left( i,j\right) }\left( \mathcal{H}\right) }\omega
\left( p\right) .  \label{Moments as Long Walks}
\end{equation}

Since $P_{k,r}^{\left( i,j\right) }\left( \mathcal{H}\right) $ is the set of
all closed walks of length $k$ in $\mathcal{H}$ visiting nodes in the set $%
\mathcal{N}_{i,r}\cup \mathcal{N}_{j,r}$, we can apply Lemma \ref{Moments as
Walks} to obtain%
\begin{eqnarray}
\frac{1}{n}\sum_{p\in P_{k,r}^{\left( i,j\right) }\left( \mathcal{H}\right)
}\omega \left( p\right) &=&m_{k}\left( L_{\mathcal{G}}\left( \mathcal{N}%
_{i,r}\cup \mathcal{N}_{j,r}\right) \right)  \notag \\
&=&\frac{1}{n}\text{Trace}\left( U_{r,\left( i,j\right) }\right) ^{k}.
\label{U from Moments}
\end{eqnarray}%
Similarly, for $P_{k,r}^{\left( i,j\right) }\left( \mathcal{H}^{^{\pm
}}\right) $, we obtain%
\begin{equation}
\frac{1}{n}\sum_{p\in P_{k,r}^{\left( i,j\right) }\left( \mathcal{H}^{^{\pm
}}\right) }\omega \left( p\right) =\frac{1}{n}\text{Trace}\left( U_{r,\left(
i,j\right) }^{\pm }\right) ^{k}.  \label{U+ from Moments}
\end{equation}%
Finally, substituting (\ref{U from Moments}) and (\ref{U+ from Moments}) in (%
\ref{Moments as Long Walks}) provides us with the statement of our theorem.
\end{proof}

\begin{remark}[Computational cost]
According to Lemma~\ref{Moment Perturbation for Removal}, we can compute the
increment or decrement in the Laplacian spectral moments (up to order $2r+1$%
) by computing Trace$( U_{r,\left( i,j\right) }) ^{k}$ and Trace$(
U_{r,\left( i,j\right) }^{\pm }) ^{k}$. Notice that the sizes of $%
U_{r,\left( i,j\right) }$ and $U_{r,\left( i,j\right) }^{\pm }$ are $%
\left\vert \mathcal{N}_{i,r}\cup \mathcal{N}_{j,r}\right\vert $, which is
usually small for large sparse graphs (and moderate $r$).
\end{remark}



\section{Decentralized Control of Spectral Moments}

\label{sec:controlmoments}

In this section, we integrate the results developed in Section~\ref{Moments
from Metrics} with a novel technique for distributed connectivity
verification of edge additions or deletions in order to obtain a distributed
solution to Problem~\ref{Main Problem} in the form of (\ref{Optimization
Step}), as discussed in Section~\ref{sec_problem_definition}. %
%
This relies on the assumption that an agent at node $i$ is able to
communicate at time slot $t$ with all the agents in its first-order
neighborhood $\mathcal{N}_{i,1}\left( t\right) $ only.\footnote{%
Notice that, since $\mathcal{G}\left( t\right) $ is time-dependent, so are
the neighborhoods $\mathcal{G}_{i,r}\left( t\right) =\left( \mathcal{N}%
_{i,r}\left( t\right) ,\mathcal{E}_{i,r}\left( t\right) \right) $.}
Moreover, we also assume that every agent has only a myopic view of the
network structure. This means that at time slot $t$ agent $i\in \mathcal{V}$
only knows the topology of the neighborhood $\mathcal{G}_{i,r}\left(
t\right) $, within a particular radius $r$. This limits the set of possible
actions that every agent $i$ can take in every step of the iteration (\ref%
{Optimization Step}), to be \emph{local} edge additions of non-edges $\left(
i,j\right) \not\in \mathcal{E}\left( t\right) $ in $\mathcal{G}_{i,r}\left(
t\right) $ or local edge deletions of edges $\left( i,j\right) \not\in 
\mathcal{E}\left( t\right)$ in $\mathcal{G}_{i,1}\left( t\right) $.

In what follows, it will be useful to predetermine the \emph{master node}
for each edge $\left( i,j\right) \in \mathcal{E}\left( t\right) $, which can
be arbitrarily chosen from the set of nodes $\left\{ i,j\right\} $. The
notion of master node is useful to coordinate actions in our decentralized
algorithm. The agent located at the master node of $\left( i,j\right) $ is
the only one with the authority to decide if edge $\left( i,j\right) $ is
deleted. We denote by $\mathcal{D}_{i}(t)$ the set of edges having node $i$
as its master. In our simulations, we choose this set to be $\mathcal{D}%
_{i}(t)\triangleq \left\{ \left( i,j\right) \in \mathcal{E}%
(t)\;|\;i>j\right\} $.\footnote{%
Since the indices of all nodes in the network are distinct natural numbers,
this definition results in a unique assignment.} Similarly, it is useful to
predefine a master node for each nonedge\footnote{%
A pair of nodes $\left( i,k\right) $ is a nonedge of $\mathcal{G}$ if $%
\left( i,k\right) \not\in \mathcal{E}\left( \mathcal{G}\right) $.} $\left(
i,k\right) \not\in \mathcal{E}\left( t\right) $. The agent located at the
master node of the nonedge is the only one with the authority to decide if
edge $\left( i,k\right) $ is added to the network. We denote by $\mathcal{A}%
_{i}\left( t\right) $ the set of nonedges having node $i$ as its master. In
our case, we define this set as $\mathcal{A}_{i}\left( t\right) \triangleq
\left\{ \left( i,k\right) \not\in \mathcal{E}(t)\;|\;k\in \mathcal{N}%
_{i,r}(t)\text{ and}\;i>k\right\} $, where we limit node $k$ to be in $%
\mathcal{N}_{i,r}(t)$, since we are only considering local edge additions.



\subsection{Connectivity-Preserving Edge Deletions}

\label{sec:link_deletions}

In a centralized framework, network connectivity can be inferred from the
number of trivial eigenvalues of the Laplacian matrix. However, when only
local network information is available, only sufficient conditions for
connectivity can be verified. One such condition is the requirement that $%
\left\vert \mathcal{N}_{j}\left( t\right) \cap \mathcal{N}_{i,r}\left(
t\right) \right\vert >1$, which can be locally verified by agent $i$ with
knowledge of only $\mathcal{G}_{i,r}$. Since this condition is only
sufficient but not necessary for connectivity preservation, we need a
mechanism to check connectivity for those edges in the set 
\begin{equation*}
\mathcal{C}\left( t\right) =\left\{ \left( i,j\right) \in \mathcal{E}%
(t)\;:\;\left\vert \mathcal{N}_{j}\left( t\right) \cap \mathcal{N}%
_{i,r}\left( t\right) \right\vert =1\right\} .
\end{equation*}%
of critically connected edges, for which the sufficient condition does not
hold.

The proposed mechanism relies on a the concept of a maximum consensus. In
particular, consider a graph $\mathcal{G}(t)=\left( \mathcal{V},\mathcal{E}%
(t)\right) $ at time $t\geq 0$ and for any $(i,j)\in\mathcal{C}(t)$
associate a scalar variable $x_k^{(i,j)}(s)\in\mathbb{R}$ with every node $%
k\in\mathcal{V}$. Assume that the variables $x_k^{(i,j)}(s)$ are randomly
initialized and run the following maximum consensus update 
\begin{equation}  \label{eqn_simple_max_consensus}
x_{k}^{(i,j)}\left( s+1\right) =\max_{l\in \mathcal{N}_{k,1}-\{i,j\}} \{
x_{l}^{(i,j)}\left( s\right) \}
\end{equation}%
on the graph $\mathcal{G}(t)-(i,j)$ obtained by virtually disabling the link 
$(i,j)$ via blocking communication through it. Then, the network $\mathcal{G}%
(t)-(i,j)$ is almost surely connected if and only if the variables $%
x_{k}^{(i,j)}(s)$ for all $k\in \mathcal{V}$ converge to the common value $%
\max_{k}x_{k}^{(i,j)}\left( 0\right) $. Note that convergence in this case
takes place in finite time that is upper bounded by the diameter of the
network \cite{Cortes08}. This idea can be extended to simultaneous
verification of multiple link deletions in $\mathcal{C}\left( t\right)$. In
fact, since every edge is assigned a unique master agent, we can partition
the set $\mathcal{C}(t)$ in to $|\mathcal{V}|$ disjoint subsets $\mathcal{C}%
(t)\cap\mathcal{D}_i(t)$ for all $i\in\mathcal{V}$. This allows us to define
the sets $\mathcal{P}_{ki}=\{x_{k}^{\left( i,j\right) }\left( s\right)
:\left( i,j\right) \in\mathcal{C}(t)\cap\mathcal{D}_i(t) \}$ containing all
variables of agent $k$ that have as a master agent $i$. A simple schematic
of the proposed construction is shown in the following table: 
\begin{equation*}
\left.%
\begin{array}{c|ccc}
& \mathcal{C}\cap\mathcal{D}_1 & \mathcal{C}\cap\mathcal{D}_2 & \dots \\ 
\hline
1 & \mathcal{P}_{11}=\{x_1^{(1,j)} \} & \mathcal{P}_{12}=\{ x_1^{(2,j)} \} & 
\dots \\ 
2 & \mathcal{P}_{21}=\{ x_2^{(1,j)} \} & \mathcal{P}_{22}=\{ x_1^{(2,j)} \}
& \dots \\ 
\vdots & \vdots & \vdots & \ddots%
\end{array}%
\right.
\end{equation*}
Note that the second subscript $i$ in the set $\mathcal{P}_{ki}$ denotes
that master agent for the variables contained in $\mathcal{P}_{ki}$.
Therefore, agent $k$ initializes only those variables in the set $\mathcal{P}%
_{kk}$. Finally, stack all variables in the set $\mathcal{P}_{ki}$ in a
vector $\mathbf{x}_{ki}(s)\in \mathbb{R}^{|\mathcal{C}(t)\cap\mathcal{D}%
_{i}(t)|} $ and denote by $\left[ \mathbf{x}_{ki}(s)\right] _{\left(
i,j\right) }$ the scalar state associated with edge $\left( i,j\right) \in 
\mathcal{C}(t)\cap \mathcal{D}_{i}(t) $. %
Using the notation defined above, we can simultaneously verify connectivity
for all edges in $\mathcal{C}(t) $ by a high-dimensional consensus. For
this, every agent $k$ initializes randomly all vectors $\mathbf{x}%
_{ki}(0)\in \mathbb{R}^{|\mathcal{C}(t)\cap\mathcal{D}_i(t) |}$ for all
masters $i\in\mathcal{V}$ and updates the vectors $\mathbf{x}_{ki}(s)\in 
\mathbb{R}^{|\mathcal{C}(t)\cap\mathcal{D}_i(t) |}$ as follows:

\noindent \emph{Case I}: If $k$ is not a neighbor of the master agent $i$,
i.e., if $k\not\in \mathcal{N}_i$, then it updates the vectors $\mathbf{x}%
_{ki}(s)$ as 
\begin{equation}
\mathbf{x}_{ki}(s+1):=\max_{l\in \mathcal{N}_{k}(t)}\left\{ \mathbf{x}%
_{ki}(s),\mathbf{x}_{li}(s)\right\} ,  \label{eqn:deletion_1}
\end{equation}%
where the maximum is applied elementwise on the vectors.

\noindent \emph{Case II}: If $k$ is a neighbor of the master agent $i$,
i.e., if $k\in \mathcal{N}_i$, then it virtually removes link $(k,i)$ and
updates the entry $\lbrack \mathbf{x}_{ki}(s)]_{(k,i)}$ as 
\begin{equation}
\lbrack \mathbf{x}_{ki}(s+1)]_{(k,i)}:=\max_{l\in \mathcal{N}%
_{k}(t)\backslash \{i\}}\left\{ [\mathbf{x}_{ki}(s)]_{(k,i)},[\mathbf{x}%
_{li}(s)]_{(k,i)}\right\} ,  \label{eqn:deletion_2}
\end{equation}%
while for all other links $(j,i)\in \mathcal{C}(t)\cap\mathcal{D}_i(t)$ with 
$j\neq k$ it updates the entries $\lbrack \mathbf{x}_{ki}(s)]_{(j,i)}$ as 
\begin{equation}
\lbrack \mathbf{x}_{ki}(s+1)]_{(j,i)}:=\max_{l\in \mathcal{N}_{k}(t)}\left\{
[\mathbf{x}_{ki}(s)]_{(j,i)},[\mathbf{x}_{li}(s)]_{(j,i)}\right\}.
\label{eqn:deletion_3}
\end{equation}

\noindent \emph{Case III}: For the variables $\mathbf{x}_{kk}(s)$ for which $%
k$ is the master, it virtually removes the links $(k,j)\in\mathcal{C}(t)\cap%
\mathcal{D}_k(t)$ and updates the entries $\lbrack \mathbf{x}%
_{kk}(s)]_{(k,j)}$ as 
\begin{equation}
\lbrack \mathbf{x}_{kk}(s+1)]_{(k,j)}:=\max_{l\in \mathcal{N}%
_{k}(t)\backslash \{j\}}\left\{ [\mathbf{x}_{kk}(s)]_{(k,j)},[\mathbf{x}%
_{lk}(s)]_{(k,j)}\right\} .  \label{eqn:deletion_4}
\end{equation}

The high-dimensional consensus defined by (\ref{eqn:deletion_1})--(\ref%
{eqn:deletion_4}) converges in a finite time $\tau >0$ \cite{Cortes08}. When
this happens, node $k$ requests the entries $[\mathbf{x}_{ik}(\tau
)]_{(k,i)} $ from all its neighbors $i\in\mathcal{N}_k(t)$ for which $%
(k,i)\in \mathcal{C}(t)\cap\mathcal{D}_{k}(t)$ and compares them with $[%
\mathbf{x}_{kk}(\tau )]_{(k,i)}$. Since, violation of connectivity due to
deletion of $(k,j)$ would result in nodes $k$ and $i$ being in different
connected components, if $[\mathbf{x}_{kk}(s)]_{(k,i)}=[\mathbf{x}%
_{ik}(s)]_{(k,i)}$ then the network $\mathcal{G}\left( t\right) -\left(
k,i\right) $ would still remain connected. Hence, we can define the set 
\begin{equation}
\mathcal{S}_{k}(t)\triangleq \left\{ (k,i)\in \mathcal{C}(t) \cap\mathcal{D}%
_{k}(t) \; : \; [\mathbf{x}_{kk}(\tau)]_{(k,i)}=[\mathbf{x}%
_{ik}(\tau)]_{(k,i)}\right\} ,  \label{eqn:safe_deletions}
\end{equation}%
containing the edges in $\mathcal{C}(t) \cap\mathcal{D}_{k}(t)$ whose
removal does not disconnect the network.

\label{sec:connect_verif}

\begin{algorithm}[t]
\caption{Connectivity verification} \label{alg:connect_verif}
\begin{algorithmic}[1]
\REQUIRE ${\bf x}_{ij}(0)\in\mathbb{R}^{|\mathcal{C}(t)\cap\mathcal{D}_{j}(t)|}$ for all $i,j\in\mathcal{V}$ ;%
\FOR{$s=1:\tau$}%
\STATE Update ${\bf x}_{ij}(s+1)$ by (\ref{eqn:deletion_1})--(\ref{eqn:deletion_4});%
\ENDFOR
\STATE Compute $\mathcal{S}_i(t)$ by (\ref{eqn:safe_deletions});%
\end{algorithmic}
\end{algorithm}

\subsection{Most Beneficial Local Action}

\label{sec:best_local_action}

To solve Problem~\ref{Main Problem} via the iterative algorithm proposed in (%
\ref{Optimization Step}), we need to add or delete an edge $\left(
i,j\right) $ that minimizes the spectral pseudometric $d_{K}(S(\mathcal{G}%
_{\pm (i,j)}(t)),S^{\ast })$ at every time step $t$. For this, let $SD_i(t)
\triangleq d_{K}\left( S\left( \mathcal{G}(t) \right) ,S^{\ast }\right)$
denote a local copy of the spectral distance of the graph $\mathcal{G}(t)$
that is available to agent $i$, so that initially $SD_i(0)=SD(0)$ for all
agents $i\in\mathcal{V}$. The quantity $SD(0)$ can be computed in a
distributed way by means of distributed averaging, according to Theorem~\ref%
{Moments from Subgraphs}. Then, the key idea is that every master agent $i$
computes the spectral distance $SD_{\pm (i,j)}(s) \triangleq d_{K}\left(
S\left( \mathcal{G}_{\pm (i,j)}(s) \right) ,S^{\ast }\right)$ resulting from
adding a link $(i,j)\in\mathcal{A}_i(t)$ or deleting a link $(i,j)\in%
\mathcal{S}_i(t)$. Computation of this distance relies on Theorem \ref{Moment
Perturbation for Removal} and
requires that agent $i$ has knowledge of the structure of its neighborhoods $%
\mathcal{G}_{i,r}$ only, for $r=\lfloor K/2\rfloor$. For all possible local
edge additions or deletions, master agent $i$ determines the most beneficial
one 
\begin{equation*}
(i,j_i^*(t)) \triangleq \underset{(i,j)\in \mathcal{A}_{i}(t)\cup\mathcal{S}%
_i(t) }{\mathrm{argmin}}\; \left\{SD_{\pm (i,j)}(t) - SD_i(t)\right\}.
\end{equation*}%
Note that the minimization above may result in multiple edges having the
same optimal value. Such ties can be broken via, e.g., a coin toss. Then,
the largest decrease in the error associated with the most beneficial edge $%
(i,j_i^*(t))$ becomes: 
\begin{equation*}
SD_{i}(t) \triangleq \left\{%
\begin{array}{ll}
SD_{\pm(i,j_i^*)}(t), & \text{if $\min_{(i,j)\in \mathcal{A}_{i}(t)\cup%
\mathcal{S}_i(t)}\{SD_{\pm(i,j)}(t)-SD_i(t)\}\leq 0$} \\ 
D, & \text{otherwise}%
\end{array}%
\right..
\end{equation*}
for a large constant $D>0$. In other words, $SD_{i}(t)$ is nontrivially
defined only if the exists a link adjacent to node $i$ that if added or
deleted decreases the error function $SD(t)$. Otherwise, a large value $D>0$
is assigned to $SD_{i}(t)$ to indicate that this action is not beneficial to
the final objective. Finally, for each node $i$, we initialize the state
vector 
\begin{equation*}
\mathbf{b}_{i}(0) \triangleq \left[ i\; j_i^*(t)\;SD_i(t) \;\mathbf{m}%
(i,j_i^*(t))\right] ^{T},
\end{equation*}%
containing the best local action $(i,j_i^*(t))$, the associated spectral
pseudodistance $SD_i(t)$, and the vector of resulting moments 
\begin{equation*}
\mathbf{m}(i,j_i^*(t)) \triangleq \left[ m_{k}\left(S\left( \mathcal{G}%
_{\pm(i,j_i^*)}(t) \right) \right)\right] _{k=1}^{K}.
\end{equation*}%
In the following section, we discuss how to compare all local actions $%
\mathbf{b}_{i}(t)$ for all nodes $i\in \mathcal{V}$ to find the best global
action that minimizes the spectral pseudometric.

\subsection{From Local Information to Global Action}

\label{sec:best_global_action}

\begin{algorithm}[t]
\caption{Globally most beneficial action} \label{alg1}
\begin{algorithmic}[1]
\REQUIRE ${\bf b}_{i}(0)\triangleq[i \; j_i^*(t) \; SD_i(t) \; {\bf m}(i,j_i^*(t))]^{T}$;%
\FOR{$s=1:\tau$}%
\STATE ${\bf b}_{i}(s+1) := {\bf b}_{j}(s)$, with $j=\max\{{\rm
argmin}_{k\in\mathcal{N}_i(t)}\{[{\bf b}_{i}(s)]_3,[{\bf b}_{k}(s)]_3\}$;%
\ENDFOR
\IF{$[{\bf b}_{i}(\tau)]_3<D$}%
\STATE Update $\mathcal{N}_i(t+1)$, ${\bf m}_i(t+1)$ and $SD_i(t+1)$ according to (\ref{eqn:update_neighbors_1})--(\ref{eqn:update_CME});%
\ELSIF{$[{\bf b}_{i}(\tau)]_3=D$}%
\STATE No beneficial action. Algorithm has converged;%
\ENDIF%
\end{algorithmic}
\end{algorithm}

In order to obtain the overall most beneficial action, all local actions
need to be propagated in the network and compared against each other. For
this, every agent $i$ communicates with its neighbors and updates 
its desired action $\mathbf{b}_{i}(s)$ with the action $\mathbf{b}_{j}(s)$
corresponding to the node $j$ that contains the smallest distance to the
target moments $[\mathbf{b}_j(s)]_{3}\triangleq SD_i(t)$, i.e., 
\begin{eqnarray*}
\mathbf{b}_{i}(s+1) &=&\mathbf{b}_{j}(s),\text{ where} \\
j &=&\mathrm{argmin}_{k\in \mathcal{N}_{i}(t)}\{[\mathbf{b}_{i}(s)]_{3},[%
\mathbf{b}_{k}(s)]_{3}\}.
\end{eqnarray*}%
In case of ties in the distances to the targets $[\mathbf{b}_{j}(s)]_{3}$,
then the node with the largest index is selected (line 2, Alg.~\ref{alg1}).
Note that line 2 of Alg.~\ref{alg1} is essentially a minimum consensus
update on the entries $[\mathbf{b}_{i}(s)]_{3}$ and will converge to a
common outcome for all nodes in finite time $\tau>0$, when they have all
been compared to each other. 
When the consensus has converged, if there exists a node whose desired
action decreases the distance to the target moments, i.e., if $[\mathbf{b}%
_i(s)]_{3}<D$ (line 4, Alg.~\ref{alg1}), then Alg.~\ref{alg1} terminates
with a greedy action and node $i$ updates its set of neighbors $\mathcal{N}%
_{i}(t+1)$ and vector of moments $\mathbf{m}_{i}(t+1)$ (line 5, Alg.~\ref%
{alg1}). If the optimal action is a link addition, i.e., if $[\mathbf{b}%
_i(\tau)]_{2}\not\in \mathcal{N}_{i}(t)$, then 
\begin{equation}
\mathcal{N}_{i}(t+1):=\mathcal{N}_{i}(t)\cup \left\{ \lbrack \mathbf{b}%
_{i}(\tau)]_{2}\right\} .  \label{eqn:update_neighbors_1}
\end{equation}%
On the other hand, if the optimal action is a link deletion, i.e., if $[%
\mathbf{b}_{i}(\tau)]_{2}\in \mathcal{N}_i(t)$, then 
\begin{equation}
\mathcal{N}_i(t+1):=\mathcal{N}_i(t)\backslash \left\{ \lbrack \mathbf{b}%
_{i}(\tau)]_{2}\right\} .  \label{eqn:update_neighbors_2}
\end{equation}%
In all cases, the moments and error function are updated by 
\begin{equation}
\mathbf{m}_{i}(t+1):=\left[[\mathbf{b}_{i}(\tau)]_{4}\dots [\mathbf{b}%
_{i}(\tau)]_{4+K}\right]^T  \label{eqn:update_moments}
\end{equation}%
and 
\begin{equation}
SD_{i}(t+1):=[\mathbf{b}_{i}(\tau)]_{3},  \label{eqn:update_CME}
\end{equation}%
respectively. Finally, if all local desired actions increase the distance to
the target moments, i.e., if $[\mathbf{b}_{i}(\tau)]_{3}=D$ (line 6, Alg.~%
\ref{alg1}), then no action is taken and the algorithm terminates with a
network topology with almost the desired spectral properties. This is
because no action exists that can further decrease the distance to the
target moments.

\subsection{Synchronization}

\label{sec:synchronization}

Communication time delays, packet losses, and the asymmetric network
structure, may result in runs of the algorithm starting asynchronously,
outdated information being used for future decisions, and consequently,
nodes reaching different decisions for the same run. In the absence of a
common global clock, the desired synchronization is ideally \emph{event
triggered}, where by a triggering event we understand the time instant that
messages are transmitted and received by the nodes. For an implementation of
such a scheme see \cite{ZP08}.



\section{Numerical Simulations}

\label{sec:simulations}

In the following numerical examples, we illustrate the performance and
limitations of our iterative graph process. The objective of our simulations
is to find a graph whose Laplacian spectral moments match those of a desired
spectrum. In each example, we analyze the performance of our algorithm and
study the spectral and structural properties of the resulting graph.

\begin{example}[Star vs. Two-Star Networks]
\label{exmp:stars}In our first two simulations, we try to find graphs that
match the spectral moments of (i) a star graph and (ii) a two-star graph
(Fig. \ref{fig:2_star_network}). The Laplacian spectral moments of a star
network with $10$ nodes are: $\left( m_{k}\right) _{k=1}^{5}=\left(
1.8,10.8,100.8,1000.8,10000.8\right) $. Starting with a random graph on $10$
nodes, we run our distributed algorithm to iteratively add and delete edges
that minimize the spectral pseudodistance. We observe, in Fig. \ref%
{fig:star_network}, that the spectral pseudodistance evolves towards zero in
45 steps. We also verify that, although we are only controlling the first
five spectral moments of the Laplacian matrix, the resulting network
structure is exactly the desired star topology. This indicates that a star
graph is an extreme case in which the graph topology is uniquely defined by
their first five Laplacian spectral moments.

In our second simulation, we consider the two-star network with 20 nodes in
Fig.~\ref{fig:2_star_network} (a). The Laplacian spectral moments of this
graph are $\left( m_{k}\right) _{k=1}^{5}=\left(
1.9,12.8,133.6,1480,16590\right) $. We observe in Fig. \ref{fig:star_network}
how, after running our iterative algorithm for 94 iterations, our graph
process stops in a graph topology with a spectral pseudodistance very close
to zero (in particular, $5.2e-2$). The resulting topology, represented in
Fig. \ref{fig:2_star_network} (b), is very close to the desired two-star
network. This topology is a local minima of our evolution process because we
could transform it into our optimal two-star graph by two simple operations:
(1) Adding an edge connecting nodes $u$ and $v$ (Fig. \ref%
{fig:2_star_network} (b)), and (2) removing edge $\left( u,w\right) $. On
the other hand, one can verify that step (1) would increase the spectral
pseudodistance; therefore, our greedy evolution process does not follow this
two-steps path. Despite this limitation, our final topology is remarkably
close to the two-star network and their eigenvalue spectra are very similar,
as shown in Fig.~\ref{fig:CDFs}.

\begin{figure}[tbp]
\centering\includegraphics[width=0.7\linewidth]{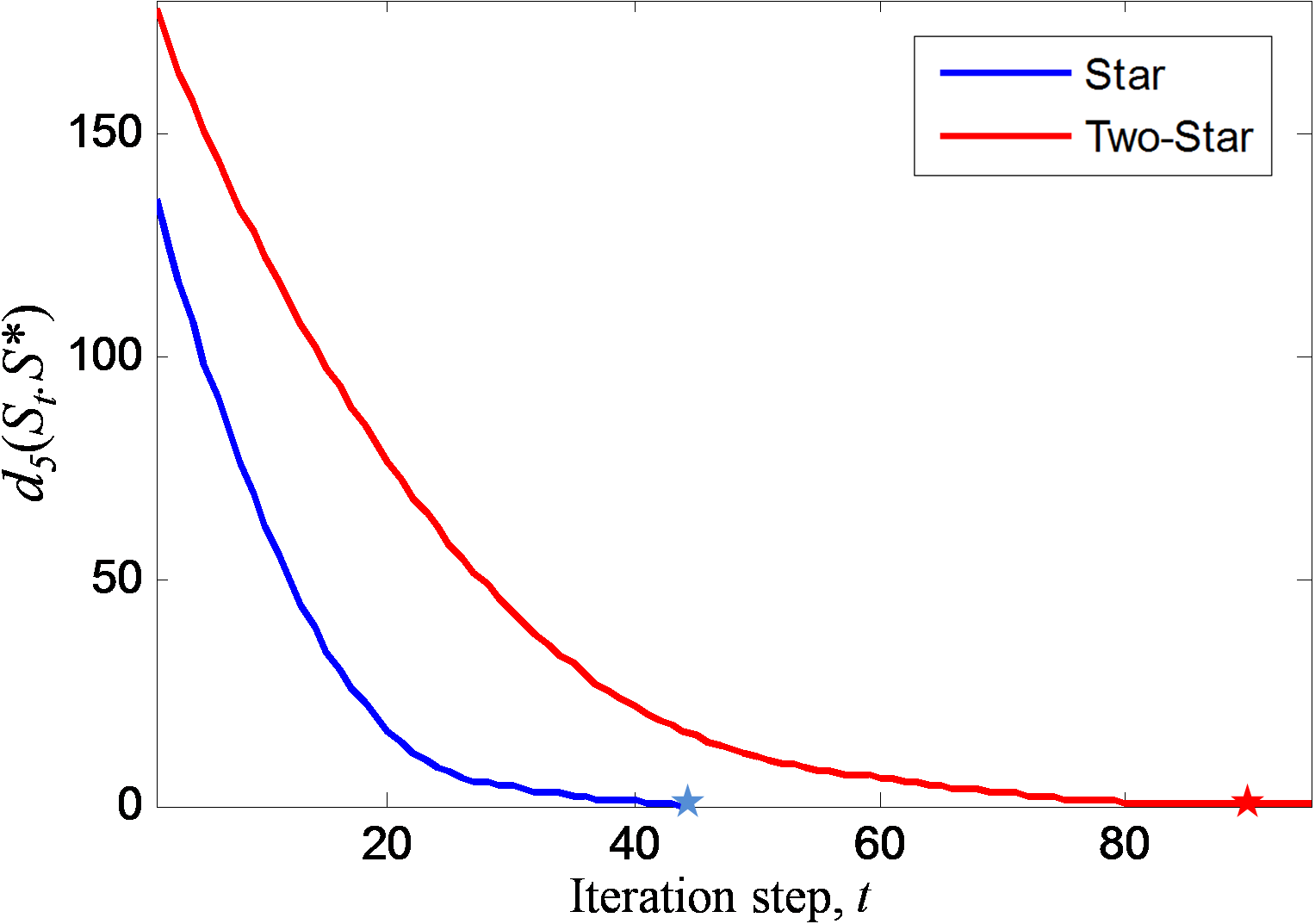}
\caption{Convergence of the spectral pseudodistance $d_{k}(S_{t},S^{\ast })$
for the star graph (blue plot) and the two-stars graph (red plot), where $%
S^{\ast }$ is the spectrum of the desired graph and $S_{t}$ is the spectrum
of $\mathcal{G}_{t}$.}
\label{fig:star_network}
\end{figure}
\end{example}

\begin{figure}[tbp]
\centering\includegraphics[width=0.7\linewidth]{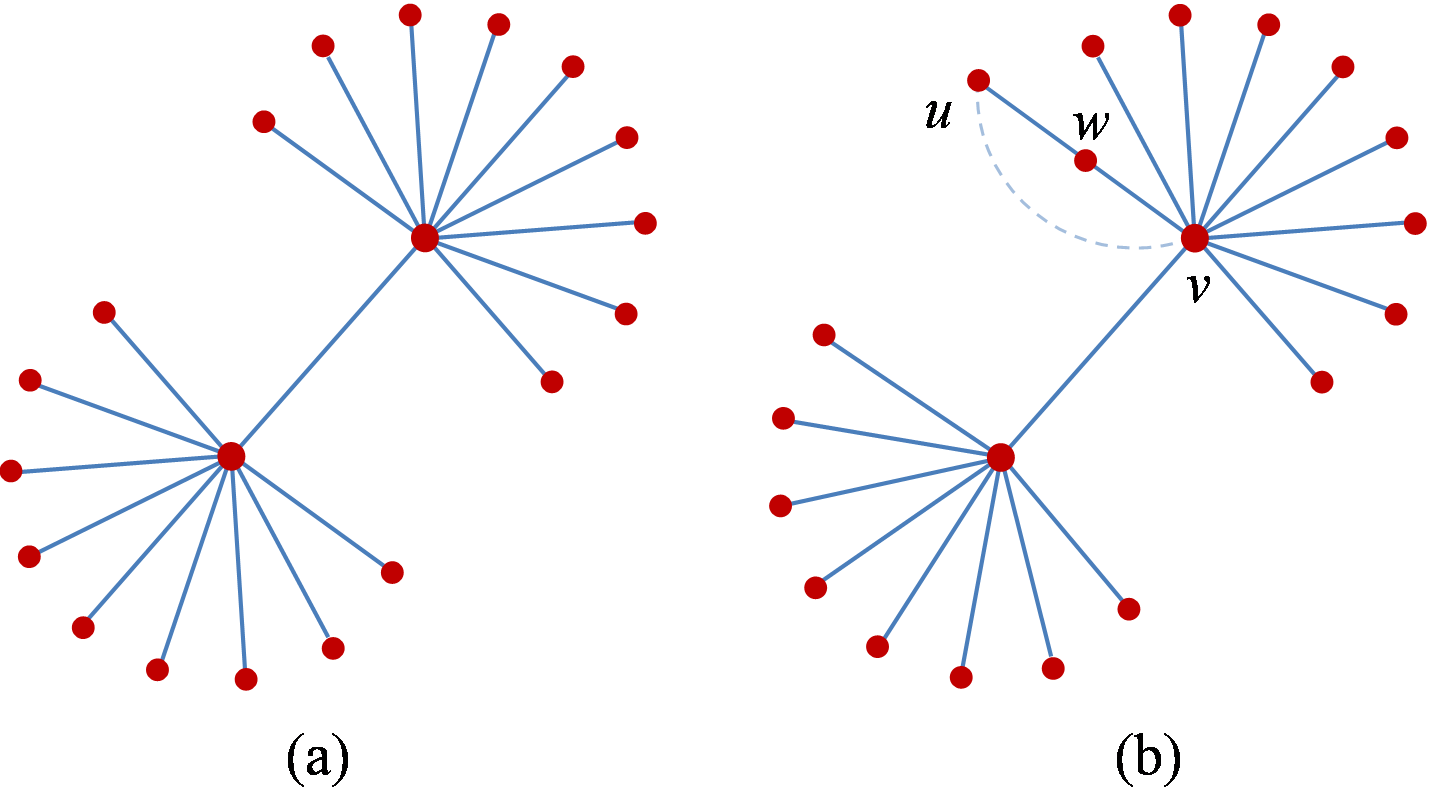}
\caption{Structures of the two-stars network (a) and the network returned by
our algorithm (b).}
\label{fig:2_star_network}
\end{figure}

\begin{figure}[tbp]
\centering\includegraphics[width=0.7\linewidth]{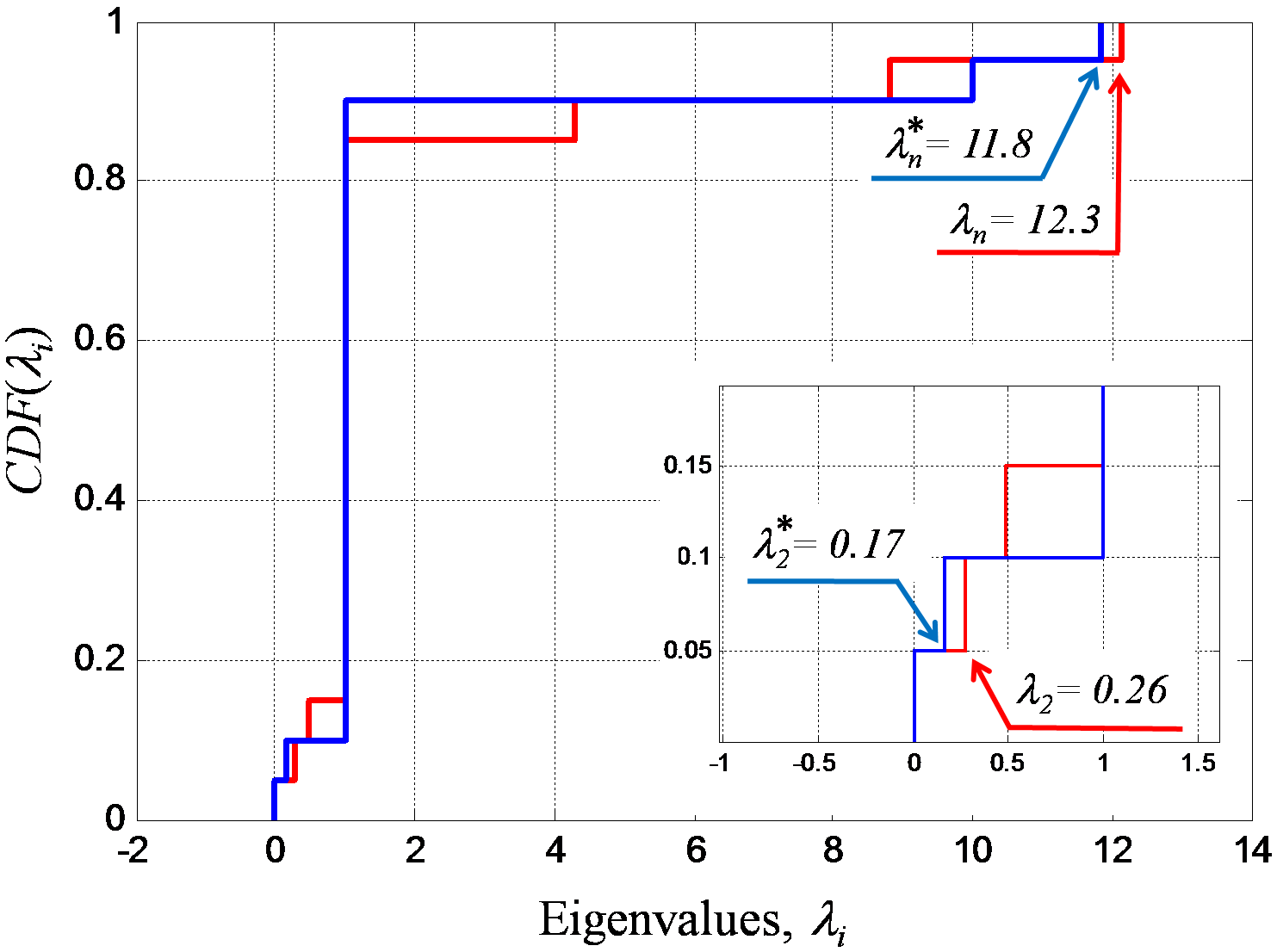}
\caption{Empirical cumulative distribution functions for the eigenvalues of
the two-stars graph (blue) and the graph returned by our algorithm (red).
The subgraph in the lower right corner shows the CDF's around the origin.}
\label{fig:CDFs}
\end{figure}

\begin{example}[Chain vs. ring networks]
\label{exmp:chains_rings}In the next two simulations, we try to find graphs
that match the spectral moments of (i) a ring graph and (ii) a chain graph.
Starting from a random graph, we run our iterative algorithm to match the
spectral moments of a chain graph with 20 nodes, $\left( m_{k}\right)
_{k=1}^{5}=\left( 1.9,5.6,18.4,63.6,226.4\right) $. In this case, the
spectral pseudodistance converges to zero in finite time and the final
topology is exactly the desired chain graph. On the other hand, if we try to
match the spectral moments of the ring graph in Fig. \ref{fig:ring_networks}
(a), with $\left( m_{k}\right) _{k=1}^{5}=\left( 2,6,20,70,252\right) $, an
exact reconstruction is very difficult to achieve. In Fig.~\ref%
{fig:ring_networks} (b), we depict the graph returned by our algorithm,
after 83 iterations. Note that since we are only allowing local structural
modifications in our graph process, it is hard for our algorithm to
replicate long cycles in the graph. On the other hand, although the
structure of the resulting network is not the desired ring graph, its
eigenvalue spectrum is remarkably close to that of a ring, as we can see in
Fig.~\ref{fig:CDF_ring}.
\end{example}

\begin{figure}[tbp]
\centering\includegraphics[width=0.7\linewidth]{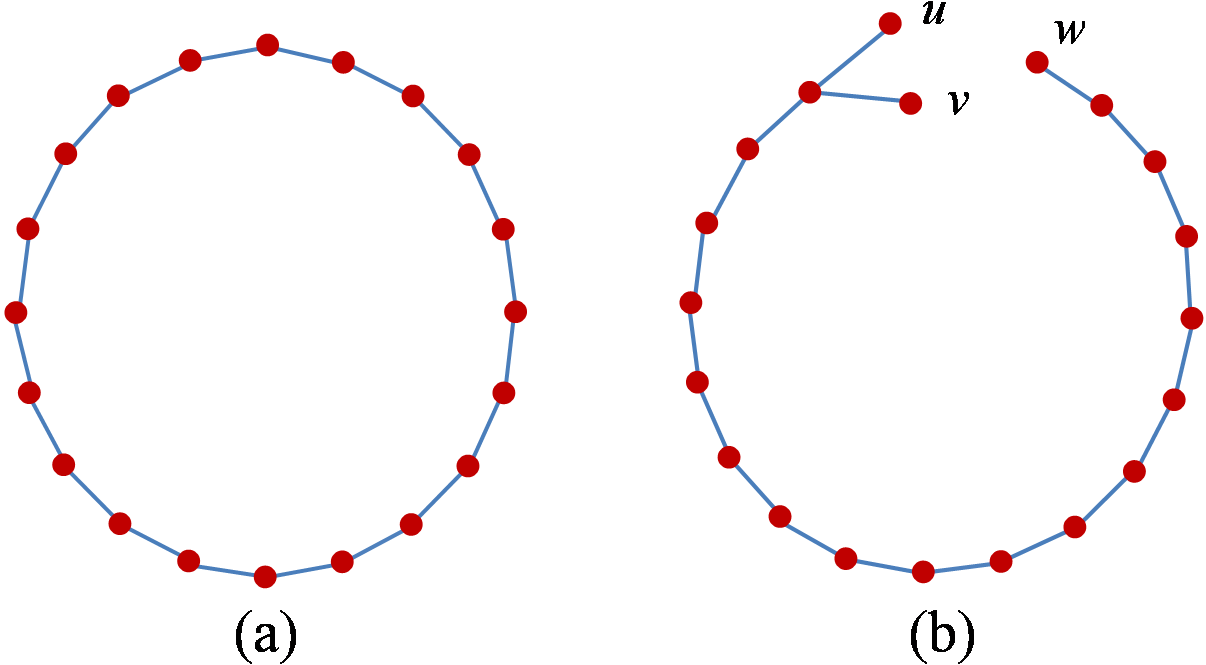}
\caption{In (a) we observe a ring graph with 20 nodes. The topology returned
by our iterative algorithm can be observed in (b).}
\label{fig:ring_networks}
\end{figure}

\begin{figure}[tbp]
\centering\includegraphics[width=0.7\linewidth]{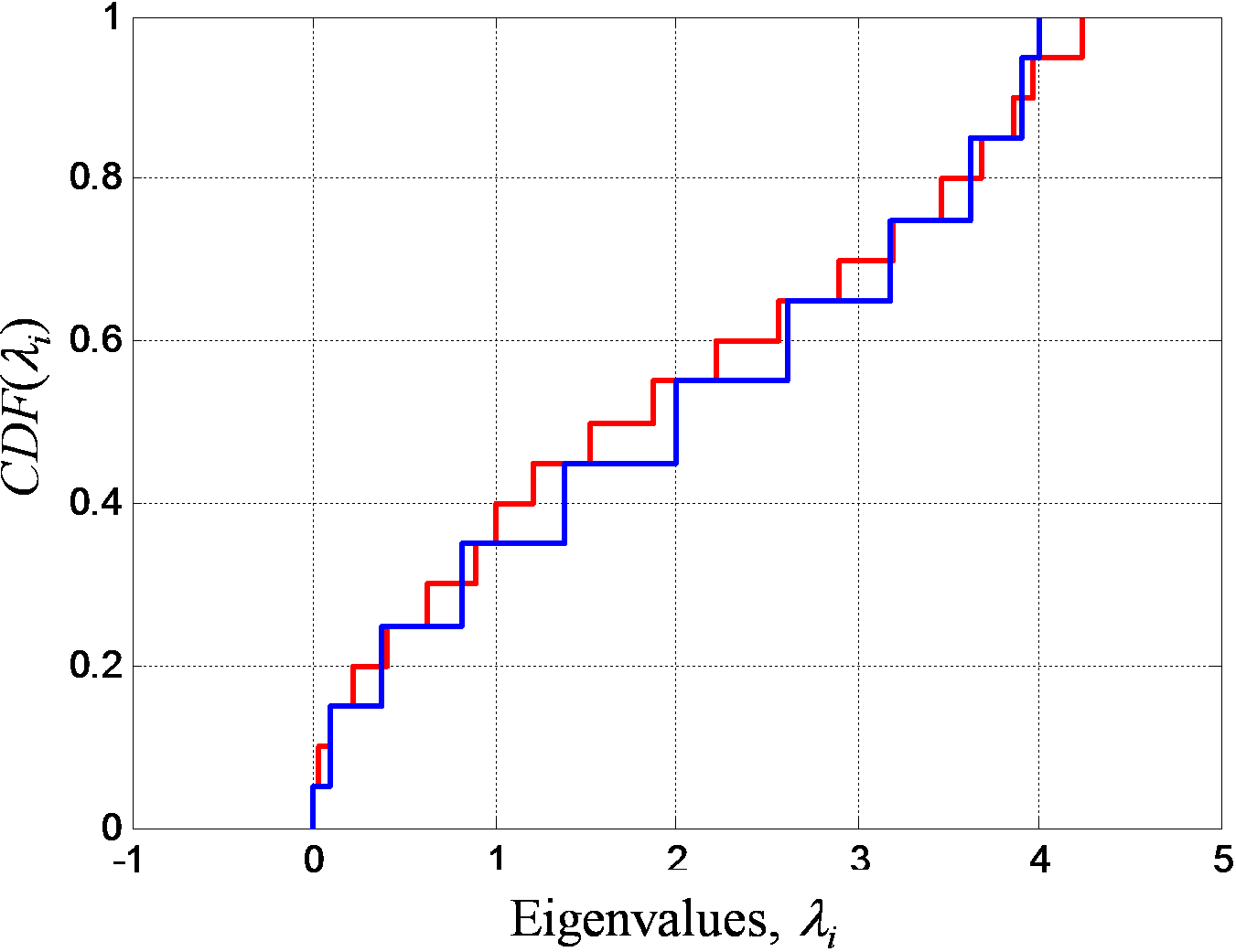}
\caption{Empirical cumulative distribution of eigenvalues for the ring graph
with 20 nodes (blue plot) and the topology returned by our algorithm in Fig.~%
\protect\ref{fig:ring_networks} (b) (red plot).}
\label{fig:CDF_ring}
\end{figure}

The above examples illustrate two limitations of our algorithm, namely, the
existence of local minima in the graph evolution process and the inability
of our algorithm to recover long cycles. Despite these limitations, our
algorithm is able to find graph topologies with eigenvalue spectra
remarkably close to the desired ones by matching five spectral moments only.
Furthermore, the resulting topologies are structurally very similar to the
desired ones, indicating that the spectral moments of the Laplacian matrix
contains rich information about the structure of a network. In the next two
examples, we show how our algorithm is also able to efficiently generate
graphs matching the spectral properties of two popular synthetic network
models: the Small-World \cite{WS98} and the Scale-Free \cite{BA99} networks.

\begin{example}[Small-Worlds]
\label{exmp:small_world}The small-world model was proposed by Watts and
Strogatz \cite{WS98} to generate networks with high clustering\footnote{%
The clustering coefficient of a network is a measure of the number of
triangles present in the network.} coefficients and small average distance.
We can generate a small-world network by following these steps: (1) take a
ring graph with $n$ nodes, (2) connect each node in the ring to all its
neighborhoods within a distance $k$, and (3) add random edges with a
probability $p$. In this example, we generate a small-world network with $%
n=40$, $k=2$, and $p=3/n$. The first three spectral moments of a random
realization of this network are $\left( m_{k}\right) _{k=1}^{3}=\left(
6.55,51.9,457\right) $. Then, we run our algorithm to generate a graph whose
first three spectral moments are close to those of the small-world network.
After running our algorithm for $78$ iterations, we obtain a graph topology
with a spectral pseudodistance very close to zero (in particular, $1.7e-3$)
and an eigenvalue spectrum remarkably similar to that of the small-world
network, as shown in Fig.~\ref{fig:CDF_small_world}.
\end{example}

\begin{example}[Power-Law]
\label{exmp:power_law}Another popular model in the `Network Science'
literature is the scale-free network. This model was proposed by Barab\'{a}%
si and Albert in \cite{BA99} to explain the presence of heavy-tailed degree
distributions in many real-world networks. In this example, we generate a
random power-law network with $n=50$ nodes and $m=4$, where $m$ is a
parameter that characterizes the average degree of the resulting network
(see \cite{BA99} for more details about this model). A random realization of
this network presents the following sequence of moments: $\left(
m_{k}\right) _{k=1}^{5}=(7.72,111,2.81e3,9.70e4,3.82e6)$. Then, after
running our algorithm for $98$ iterations, we obtain a graph topology with a
spectral pseudodistance very close to zero (in particular, $5.5e-2$). The
eigenvalue spectrum of the resulting topology is remarkably similar to that
of the small-world network, as shown in Fig.~\ref{fig:CDF_power_law}.
Furthermore, we can compare the degree sequences of the power-law network
and the topology generated by our algorithm. We compare these sequences,
sorted in descending order, in Fig.\ref{Fig:Degree_powerlaw}. We observe how
the degree sequence of the topology obtained in our algorithm is remarkably
close to that of the power-law network. This indicates that the spectral
properties of a network contains rich information about the network
structure, in particular, the first five spectral moments seems to highly
constrain many relevant structural properties of the graph, such as the
degree distribution.
\end{example}

\begin{figure}[tbp]
\centering\includegraphics[width=0.7\linewidth]{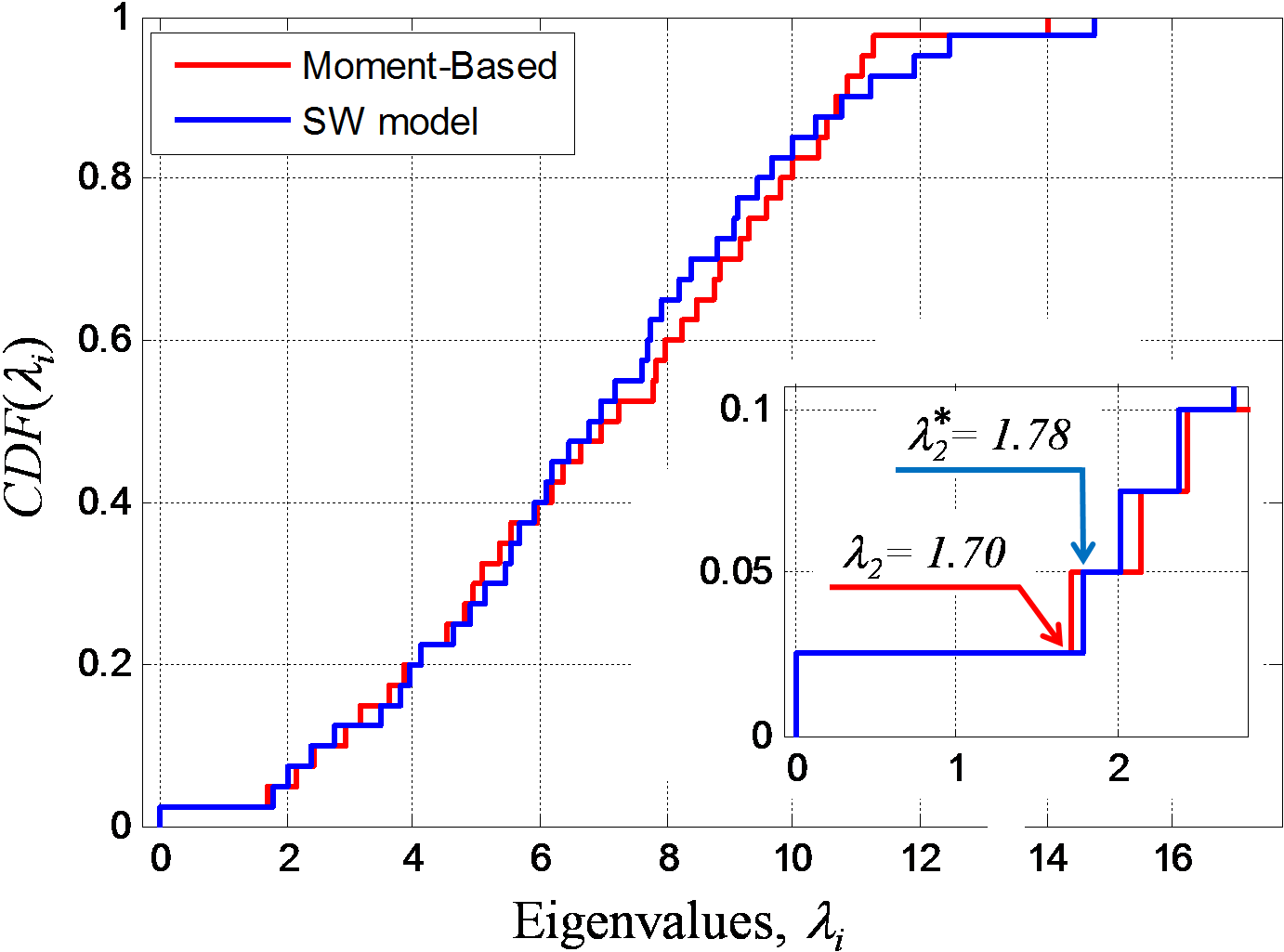}
\caption{Empirical cumulative distribution for the eigenvalue spectrum of
the small-world graph in Example \protect\ref{exmp:small_world} (blue) and
the topology resulting from our algorithm (red).}
\label{fig:CDF_small_world}
\end{figure}

\begin{figure}[tbp]
\centering\includegraphics[width=0.7\linewidth]{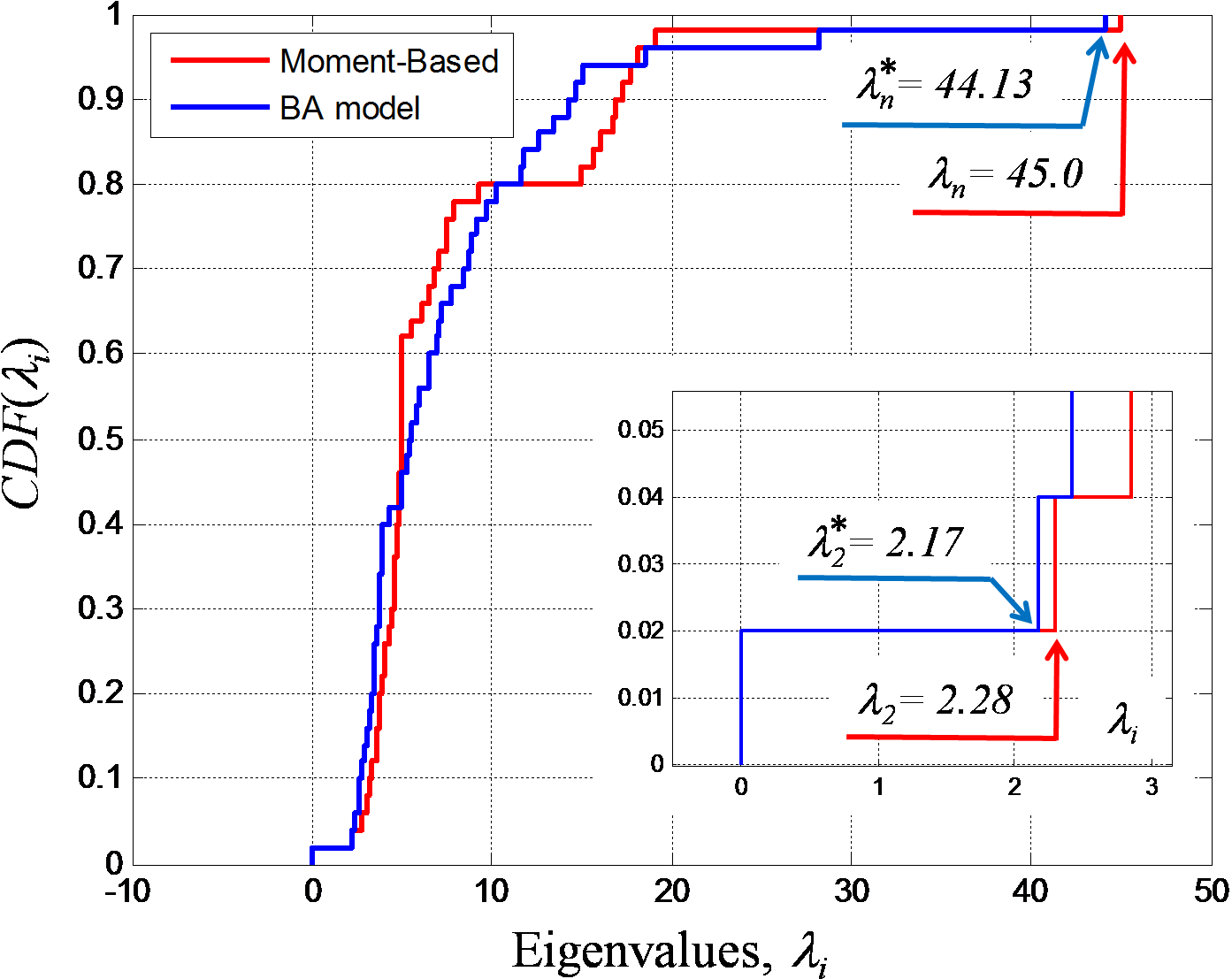}
\caption{Empirical cumulative distribution for the eigenvalue spectrum of
the power-law graph in Example \protect\ref{exmp:power_law} (blue) and the
topology resulting from our algorithm (red).}
\label{fig:CDF_power_law}
\end{figure}

\begin{figure}[tbp]
\centering\includegraphics[width=0.7\linewidth]{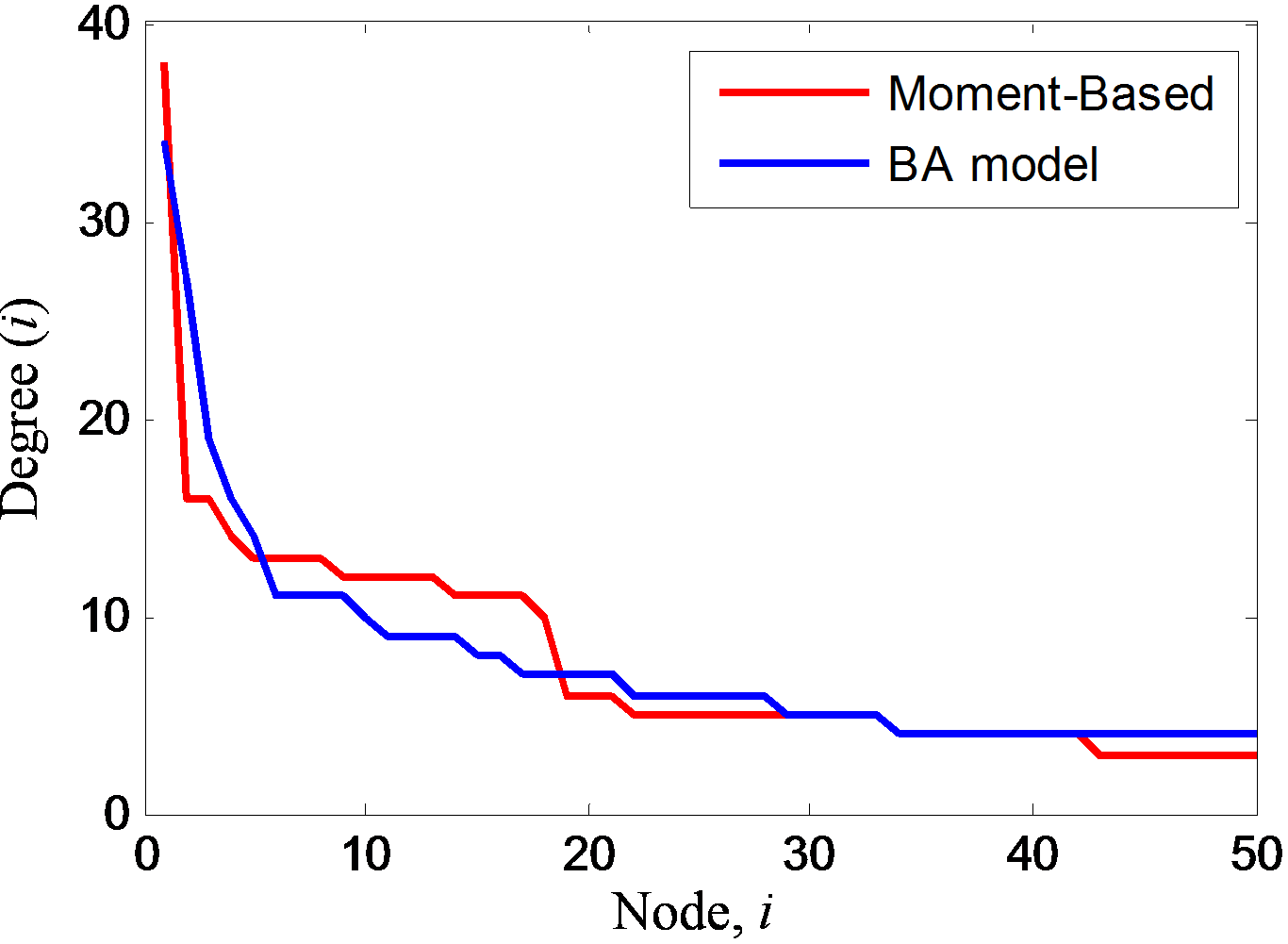}
\caption{Degree sequences (in descending order) of the power-law graph in
Example \protect\ref{exmp:power_law} (blue) and the topology resulting from
our algorithm (red).}
\label{Fig:Degree_powerlaw}
\end{figure}


\section{Conclusions and Future Research}

In this paper, we have described a fully decentralized algorithm that
iteratively modifies the structure of a network of agents with the objective
of controlling the spectral moments of the Laplacian matrix of the network.
Although we assume that each agent has access to local information regarding
the graph structure, we show that the group is able to collectively
aggregate their local information to take a global optimal decision. This
decision corresponds to the most beneficial link addition/deletion in order
to minimize a distance function that involves the Laplacian spectral moments
of the network. The aggregation of the local information is achieved via
gossip algorithms, which are also used to ensure network connectivity
throughout the evolution of the network.

Future work involves identifying sets of spectral moments that are reachable
by our control algorithm. (We say that a sequence of spectral moments is
reachable if there exists a graph whose moments match the sequence of
moments.) Furthermore, we observed that fitting a set of low-order moments
does not guarantee a good fit of the complete distribution of eigenvalues.
In fact, there are important spectral parameters, such as the algebraic
connectivity, that are not captured by a small set of spectral moments.
Nevertheless, we observed in numerical simulations that fitting the first
four moments of the eigenvalue spectrum often achieves a good reconstruction
of the complete spectrum. Hence, a natural question is to describe the set
of graphs most of whose spectral information is contained in a relatively
small set of low-order moments.

\bigskip

\appendix

\section{Proof of Theorem \protect\ref{Uniqueness from Moments}}

\begin{theorem}
\label{Uniqueness from Moments copy(1)}Consider two undirected (possibly
weighted) graphs $G_{1}$ and $G_{2}$ with (real) eigenvalue spectra $S\left(
G_{1}\right) =\{\lambda _{1}^{\left( 1\right) }\leq ...\leq \lambda
_{n}^{\left( 1\right) }\}$ and $S_{2}\left( G_{1}\right) =\{\lambda
_{1}^{\left( 2\right) }\leq ...\leq \lambda _{n}^{\left( 2\right) }\}$.
Then, $\lambda _{i}^{\left( 1\right) }=\lambda _{i}^{\left( 2\right) }$ for
all $1\leq i\leq n$ if and only if $m_{k}\left( G_{1}\right) =m_{k}\left(
G_{2}\right) $ for $0\leq k\leq n-1$.
\end{theorem}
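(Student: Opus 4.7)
The forward implication is immediate from the definition: if the two sorted spectra coincide, then $m_k(G_1) = \frac{1}{n}\sum_i (\lambda_i^{(1)})^k = \frac{1}{n}\sum_i (\lambda_i^{(2)})^k = m_k(G_2)$ for every $k \geq 0$. All the content lies in the reverse direction, which I would handle by reconstructing the characteristic polynomial of the Laplacian from its moments via Newton's identities.

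The plan is the following. Let $p_k \triangleq n\, m_k(G) = \sum_{i=1}^n \lambda_i^k$ denote the ordinary power sums, and let $e_1,\ldots,e_n$ denote the elementary symmetric polynomials in $\lambda_1,\ldots,\lambda_n$, so that the characteristic polynomial of the Laplacian is
\begin{equation*}
\chi(x) = \prod_{i=1}^n (x - \lambda_i) = \sum_{k=0}^n (-1)^k e_k\, x^{n-k}.
\end{equation*}
First I would observe that the hypothesis for $k=0$ is automatic, since $m_0 = 1$ trivially; the real data is equality of $p_1,\ldots,p_{n-1}$.

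Next I would invoke Newton's identities, which state that for $1 \leq k \leq n$,
\begin{equation*}
k\, e_k = \sum_{j=1}^{k} (-1)^{j-1}\, e_{k-j}\, p_j,
\end{equation*}
with $e_0 = 1$. Solving this recursion, each $e_k$ for $k = 1,\ldots,n-1$ is a polynomial in $p_1,\ldots,p_k$. Consequently, equality of $p_1,\ldots,p_{n-1}$ forces equality of $e_1,\ldots,e_{n-1}$ for the two graphs, which pins down every coefficient of $\chi(x)$ except possibly the constant term $(-1)^n e_n = (-1)^n \prod_i \lambda_i$.

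The one subtle point, and the step that at first looks like an obstacle, is that $n-1$ power sums seem one short of determining $n$ eigenvalues; usually one needs $p_1,\ldots,p_n$ to apply Newton's identities all the way to $e_n$. The resolution is the structural fact that every Laplacian has the all-ones vector as an eigenvector with eigenvalue zero, so $\lambda_1 = 0$ for both $G_1$ and $G_2$. This forces $e_n = \prod_i \lambda_i = 0$ in both cases, supplying the missing coefficient for free. Combining this with the Newton-identity argument above, both graphs have the same characteristic polynomial $\chi$, hence the same multiset of eigenvalues, and after sorting we get $\lambda_i^{(1)} = \lambda_i^{(2)}$ for every $1 \leq i \leq n$, as desired.
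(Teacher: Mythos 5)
Your proof is correct, but it takes a genuinely different route from the paper's. The paper argues in two stages: it first invokes the Cayley--Hamilton theorem to turn the characteristic polynomial $\phi(\lambda)=\lambda^{n}+\alpha_{n-1}\lambda^{n-1}+\cdots+\alpha_{0}$ into a linear recursion $m_{t+n}=-\alpha_{n-1}m_{t+n-1}-\cdots-\alpha_{0}m_{t}$ that extends the first $n-1$ moments to the full infinite sequence $(m_{k})_{k\geq 0}$, and then regards the spectrum as the atomic measure $\mu_{A}=\sum_{i}\delta(x-\lambda_{i})$ and applies Carleman's condition from the classical moment problem to conclude that this infinite moment sequence determines $\mu_{A}$, hence the spectrum. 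You instead reconstruct the characteristic polynomial directly via Newton's identities and never leave finite-dimensional linear algebra. Your route is the more elementary one, and in one respect the more careful one: the paper's recursion tacitly presupposes that the coefficients $\alpha_{0},\dots,\alpha_{n-1}$ are themselves determined by $m_{1},\dots,m_{n-1}$, which is precisely the content of Newton's identities together with the observation that $e_{n}=\prod_{i}\lambda_{i}=0$ because the Laplacian annihilates the all-ones vector. The paper leaves that step implicit and even phrases the intermediate claim for arbitrary symmetric matrices, where it fails (for $n=2$ the spectra $\left\{0,2\right\}$ and $\left\{1,1\right\}$ share $m_{0}$ and $m_{1}$); you correctly isolate the zero Laplacian eigenvalue as the ingredient that supplies the missing coefficient $e_{n}$. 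What the paper's heavier machinery buys is a template that works for any real spectrum once the full moment sequence is available; what yours buys is a self-contained, gap-free argument for the theorem as actually stated.
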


\begin{proof}
The theorem states that the spectrum $S\left( A\right) =\left\{ \lambda
_{i}\right\} _{i=1}^{n}$ of any $n\times n$ symmetric matrix $A$ is uniquely
characterized by its first $n-1$ spectral moments. First, we use
Cayley-Hamilton theorem to prove that the first $n-1$ spectral moments of
the spectrum $S$ characterize the whole infinite sequence of moments $\left(
m_{k}\left( S\right) \right) _{k=0}^{\infty }$, as follows. Let $\phi \left(
\lambda \right) \triangleq \det \left( \lambda I_{n}-A\right) =\lambda
^{n}+\alpha _{n-1}\lambda ^{n-1}+...+\alpha _{0}$, be the characteristic
equation of $A$. Then, from Cayley-Hamilton, we have $\phi \left( A\right)
=0 $. Multiplying $\phi \left( A\right) $ by $\frac{1}{n}A^{t}$, and
applying the trace operator, we have that,%
\begin{eqnarray*}
\frac{1}{n}\text{Trace}\left[ A^{t}\phi \left( A\right) \right] &=&\frac{1}{n%
}\text{Trace}\left( A^{t+n}\right) +\alpha _{n-1}\frac{1}{n}\text{Trace}%
\left( A^{t+n-1}\right) +...+\alpha _{0}\frac{1}{n}\text{Trace}\left(
A^{t}\right) \\
&=&m_{t+n}\left( A\right) +\alpha _{n-1}m_{t+n-1}\left( A\right) +...+\alpha
_{0}m_{t}\left( A\right) =0,
\end{eqnarray*}%
for all $t\in \mathbb{N}$. Therefore, given the sequence of moments $\left(
m_{k}\left( A\right) \right) _{k=0}^{n-1}$, we can use the recursion%
\begin{equation*}
m_{t+n}\left( A\right) =-\alpha _{n-1}m_{t+n-1}\left( A\right) -...-\alpha
_{1}m_{t+1}\left( A\right) -\alpha _{0}m_{t}\left( A\right) ,
\end{equation*}%
to uniquely characterize the infinite sequence of moments $\left(
m_{k}\left( A\right) \right) _{k=0}^{\infty }$.

Second, we prove that the infinite sequence of moments $\left( m_{k}\left(
A\right) \right) _{k=0}^{\infty }$ uniquely characterizes the eigenvalue
spectrum. Let us define the \emph{spectral measure} of the matrix $A$ with
real eigenvalues $\lambda _{1}\leq \lambda _{2}\leq ...\leq \lambda _{n}$, as%
\begin{equation*}
\mu _{A}\left( x\right) =\sum_{i=1}^{n}\delta \left( x-\lambda _{i}\right) ,
\end{equation*}%
where $\delta \left( \bullet \right) $ is the Dirac delta function. In what
follows, we prove that the spectral measure of $A$ is uniquely characterized
by its infinite sequence of spectral moments using Carleman's condition \cite%
{Akh65}. Since there is a trivial bijection between the eigenvalue spectrum
of $A$ and its spectral measure, uniqueness of the spectral measure imply
uniqueness of the eigenvalue spectrum.

Carleman's condition states that a measure $\mu $ on $\mathbb{R}$ is
uniquely characterized by its infinite sequence of moments $\left(
M_{k}\left( \mu \right) \right) _{k=1}^{\infty }$ if (\emph{i}) $M_{k}\left(
\mu \right) <\infty $ for all $k\in \mathbb{N}$, and (\emph{ii}) 
\begin{equation*}
\sum_{s=1}^{\infty }\left( M_{2s}\left( \mu \right) \right) ^{-1/2s}=\infty .
\end{equation*}%
In our case, the moments of the spectral measure $\mu _{A}$ are%
\begin{eqnarray*}
M_{k}\left( \mu _{A}\right) &=&\int_{-\infty }^{+\infty }x^{k}d\mu
_{A}\left( x\right) \\
&=&\sum_{i=1}^{n}\lambda _{i}^{k}=n~m_{k}\left( A\right) .
\end{eqnarray*}%
These moments satisfy: (\emph{i}) $M_{k}\left( \mu _{A}\right) \leq n\lambda
_{n}^{k}<\infty $, for any finite matrix $A$, and (\emph{ii})%
\begin{eqnarray*}
\sum_{s=1}^{\infty }\left( M_{2s}\left( \mu _{A}\right) \right) ^{-1/2s}
&=&\sum_{s=1}^{\infty }\left( \sum_{i=1}^{n}\lambda _{i}^{2s}\right) ^{-1/2s}
\\
&\geq &\sum_{s=1}^{\infty }\left( \lambda _{n}^{2s}\right) ^{-1/2s} \\
&=&\sum_{s=1}^{\infty }\lambda _{n}^{-1}=\infty ,
\end{eqnarray*}%
for any $A\neq 0$. As a consequence, the spectral measure of any finite
matrix $A\neq 0$ with real eigenvalues is uniquely characterized by $\left(
M_{k}\left( \mu _{A}\right) \right) _{k=0}^{\infty }$. Since, $M_{k}\left(
\mu _{A}\right) =n~m_{k}\left( A\right) $, we have that the sequence of
moments $\left( m_{k}\left( A\right) \right) _{k=0}^{n-1}$ uniquely
characterizes $\left( M_{k}\left( \mu _{A}\right) \right) _{k=0}^{\infty }$.
Therefore, the sequence of moments $\left( m_{k}\left( A\right) \right)
_{k=0}^{n-1}$ uniquely characterize the spectral measure $\mu _{A}$ and the
real eigenvalue spectrum $S=\left\{ \lambda _{i}\right\} _{i=1}^{n}$.
\end{proof}


\end{document}